\documentclass[a4paper,fleqn]{cas-sc}
\usepackage[numbers]{natbib}
\usepackage{amsthm}
\usepackage[all,pdf]{xy}
\usepackage{bm}
\usepackage{extarrows}
\usepackage{algorithm}
\usepackage{algpseudocode} 

\graphicspath{{./}{./fig/}}
\usepackage{subcaption}
\DeclareMathOperator*{\argmin}{argmin}

\newtheorem{theorem}{Theorem}

\newtheorem{assumption}{Assumption}
\newtheorem{remark}{Remark}

\newtheorem{corollary}{Corollary}
\usepackage{placeins} 

\newcommand{\rd}{\mathrm{d}}

\begin{document}
\let\WriteBookmarks\relax
\def\floatpagepagefraction{1}
\def\textpagefraction{.001}

\shorttitle{Deep-Picard Iteration for Space-time Fractional Diffusion PDEs}
\shortauthors{Zeng et al.}

\title[mode=title]{Solving High-dimensional Nonlinear Space-time Fractional Diffusion PDEs via Deep-Picard Iteration}

\author[1]{Zhijun Zeng}
\author[1]{Zhitong Chen}
\cormark[1]
\author[2,3]{Ling Qin}
\author[3,4]{Yi Zhu}
\cormark[2]
\ead{yizhu@tsinghua.edu.cn}

\affiliation[1]{organization={Department of Mathematical Sciences, Tsinghua University},
            city={Beijing},
            postcode={100084},
            country={China}}
\affiliation[2]{organization={Qiuzhen College, Tsinghua University},
            city={Beijing},
            postcode={100084},
            country={China}}
\affiliation[3]{organization={Yau Mathematical Sciences Center, Tsinghua University},
            city={Beijing},
            postcode={100084},
            country={China}}
\affiliation[4]{organization={Yanqi Lake Beijing Institute of Mathematical Sciences and Applications},
            city={Beijing},
            postcode={101408},
            country={China}}

\cortext[1]{Zhijun Zeng and Zhitong Chen contribute to this work equally.}
\cortext[2]{Corresponding author}

\begin{abstract}
We propose a Deep-Picard iteration framework for high-dimensional nonlinear
space-time fractional diffusion equations.The method is based on a nonlinear fractional
Feynman--Kac fixed-point formulation, which replaces direct discretization of
the Caputo memory term and the nonlocal fractional Laplacian by Monte Carlo
simulation of the associated fractional dynamics. Each Picard update is
approximated by stochastic label generation and realized through supervised
neural-network regression, thereby avoiding residual minimization involving
fractional differential operators. The fractional trajectories are generated by
coupling a discretized $\beta$-stable subordinator with a walk-on-spheres-type
simulation of the rotationally symmetric $\alpha$-stable L\'evy process. Numerical experiments on two-dimensional and high-dimensional
test problems ddemonstrate stable Picard convergence and accurate approximation, with tests
reported up to dimension $d=100$.
\end{abstract}


\begin{keywords}
Space-time fractional diffusion equations \sep Deep Picard iteration \sep Fractional Feynman--Kac representation \sep Walk-on-spheres \sep High-dimensional PDEs
\end{keywords}

\maketitle

\section{Introduction}

We study the numerical solution of high-dimensional nonlinear space-time fractional partial differential equations of the form
\begin{equation}\label{eq:intro-pde}
\begin{cases}
\partial_t^\beta u(t,x) + (-\Delta)^{\alpha/2}u(t,x) = f\bigl(t,u(t,x),x\bigr), & (t,x)\in (0,T]\times\Omega,\\
u(t,x)=g(t,x), & (t,x)\in [0,T]\times \Omega^c,\\
u(0,x)=u_0(x), & x\in \Omega,
\end{cases}
\end{equation}
where $\Omega\subset\mathbb{R}^d$ is a bounded domain, 
$\Omega^c:=\mathbb{R}^d\setminus\Omega$ denotes its exterior, 
$\partial_t^\beta$ is the Caputo fractional derivative defined in 
\eqref{eq:caputo-def}, $(-\Delta)^{\alpha/2}$ is the integral fractional 
Laplacian defined in \eqref{eq:fraclap-def}, and the fractional orders satisfy 
$\beta\in(0,1)$ and $\alpha\in(0,2)$. Such equations arise in practical models where classical diffusion fails, including contaminant transport in heterogeneous porous media, non-diffusive transport in plasma turbulence, and biological cell migration in complex spatial environments. In these settings, the Caputo derivative captures trapping and long-memory effects, while the fractional Laplacian represents nonlocal transport driven by long-range jumps or L\'evy-flight dynamics
\cite{BensonWheatcraftMeerschaert2000FractionalADE,
delCastilloNegreteCarrerasLynch2004PlasmaFractionalDiffusion,
CusimanoBurrageBurrage2013CellMigrationFractional}. From a probabilistic perspective, Caputo time-fractional evolution is naturally associated with inverse stable time changes, while, on the whole space $\mathbb{R}^d$, the operator $-(-\Delta)^{\alpha/2}$ is the infinitesimal generator of a rotationally symmetric $\alpha$-stable L\'evy process \cite{BaeumerMeerschaert2001StochasticCauchy,MeerschaertNaneVellaisamy2009BoundedCauchy,Kwasnicki2017TenDefs}. The integral fractional Laplacian is spatially nonlocal, and in the bounded-domain setting its Dirichlet formulation naturally involves exterior data on $\Omega^c$, which distinguishes the problem from classical local parabolic equations \cite{LischkePangEtal2020WhatFracLaplacian,DEliaGunzburger2013BoundedFracLap}.

The numerical solution of \eqref{eq:intro-pde} is challenging for three closely related reasons. First, the Caputo derivative introduces history dependence, and its direct discretization typically entails substantial storage and computational cost. Second, the integral fractional Laplacian couples each spatial point to the entire exterior domain and typically produces dense algebraic structures after discretization. Third, the nonlinear reaction term precludes an explicit linear Feynman--Kac representation and instead leads to a nonlinear operator equation of fixed-point type. As a result, classical discretization methods face not only the usual curse of dimensionality, but also additional difficulties caused by time-fractional memory and spatial nonlocality. These difficulties are further amplified in high dimensions, where accurate numerical approximation itself becomes highly nontrivial.

A large body of work has been devoted to deterministic numerical methods for fractional PDEs. For the time-fractional derivative, representative approaches include $\mathrm{L1}$-type schemes, graded meshes for resolving the initial singularity, convolution quadrature, and fast memory-saving variants based on exponential-sum approximations \cite{Lubich1986CQ,SchadleLopezFernandezLubich2006FastObliviousCQ,StynesORiordanGracia2017L1Graded,JinLazarovZhou2019Overview,jin2023numerical,jin2021fractional}. For the integral fractional Laplacian, widely used discretizations include finite difference--quadrature methods, finite element methods, and fast solvers for the resulting dense or nonlocal linear systems \cite{HuangOberman2014FDQuadratureFracLaplacian,AcostaBorthagaray2017FEMFracLap,MindenYing2020SimpleSolverFracLap,LischkePangEtal2020WhatFracLaplacian}. For space-time fractional diffusion problems, one may combine these temporal and spatial discretization strategies, and a variety of coupled schemes have also been developed; see, for example, \cite{YangTurnerLiuIlic2011TimeSpace}. These methods are effective in low and moderate dimensions, but their computational cost typically grows rapidly with dimension, and the simultaneous presence of time-fractional memory and spatial nonlocality often becomes the dominant bottleneck.

An alternative route is provided by probabilistic representations and Monte Carlo methods. For linear time-fractional and space-time fractional diffusion equations, stochastic representations in terms of stable subordinators and stable L\'evy processes are well established \cite{BaeumerMeerschaert2001StochasticCauchy,MeerschaertBensonSchefflerBaeumer2002StochasticSpaceTime,MeerschaertNaneVellaisamy2009BoundedCauchy,GorenfloMainardiEtAl2009Simulation}. For the integral fractional Laplacian on bounded domains, walk-on-spheres and related Monte Carlo methods are especially attractive because they are mesh-free and naturally compatible with exterior-value formulations \cite{KyprianouOsojnikShardlow2017UnbiasedWoS,Shardlow2019WalkOutsideSpheres,ShengSuXu2023EfficientMCIFL,LischkePangEtal2020WhatFracLaplacian}. Very recently, stochastic algorithms have also been proposed for bounded-domain space-time fractional diffusion problems, further illustrating the potential of this perspective in high dimensions \cite{CuiShengSuZhou2025StochasticST}. Nevertheless, most existing stochastic algorithms for space--time fractional diffusion are developed for linear equations, and do not
directly address the nonlinear fixed-point structure induced by reaction terms
of the form $f(t,u,x)$.

In parallel, deep-learning-based methods have broadened the range of computational approaches available for high-dimensional PDEs. Representative examples include the Deep BSDE method \cite{EHanJentzen2017DeepLearningBSDE,HanJentzenE2018DeepBSDE}, residual-based approaches such as the Deep Galerkin Method and physics-informed neural networks \cite{SirignanoSpiliopoulos2018DGM,RaissiPerdikarisKarniadakis2019PINN}, and fractional extensions such as fPINNs \cite{PangLuKarniadakis2019fPINNs}. These methods are highly flexible, but for fractional problems they typically require repeated evaluation of nonlocal operators and history-dependent residual terms during training, which can be computationally expensive and may also complicate optimization. A complementary line of work is provided by Picard-type methods and multilevel Picard schemes, which reduce high-dimensional semilinear PDEs to a sequence of expectation-based subproblems \cite{Giles2008MLMC,EHutzenthalerJentzenKruse2021MLPHeat}. More recently, Deep Picard Iteration (DPI) has demonstrated that Picard iteration and neural-network regression can be combined to replace PDE-residual objectives by supervised regression problems in high-dimensional nonlinear PDEs \cite{HanHuLongZhao2024DPI}. However, existing Picard-based methods are developed mainly for Markovian semilinear PDEs, whereas inverse-stable time changes and jump-driven spatial dynamics make the present fractional setting substantially harder to treat numerically.

In this work, we develop a deep Picard iteration framework for nonlinear space-time fractional diffusion equations in high dimensions. The method is based on the nonlinear fractional Feynman--Kac fixed-point formulation, which recasts the original problem as a stochastic fixed-point equation. This formulation replaces the direct discretization of the Caputo time-fractional derivative and the nonlocal fractional Laplacian by a sequence of Picard updates. For the numerical realization, we combine Monte Carlo simulation of the underlying fractional dynamics with neural-network regression. More specifically, the temporal component is treated through a discretization of the $\beta$-stable subordinator, while the spatial component is approximated by a walk-on-spheres-type simulation of the rotationally symmetric $\alpha$-stable L\'evy process. The resulting stochastic trajectories are then used to construct regression targets for successive neural approximations of the Picard iterates. In this way, the nonlinear fractional PDE is converted into a sequence of supervised learning problems while preserving the underlying probabilistic structure of the model.

Compared with classical discretization methods, the proposed framework has several computational advantages:
\begin{itemize}
    \item \textbf{Mesh-free treatment of fractional effects.}
    The method avoids direct discretization of both the Caputo time-fractional
    derivative and the nonlocal fractional Laplacian. Instead, the temporal
    memory and spatial nonlocality are represented through stochastic simulation
    of the underlying fractional dynamics, namely the $\beta$-stable
    subordinator and the rotationally symmetric $\alpha$-stable L\'evy process.
    This Monte Carlo formulation preserves the mesh-free character of the
    stochastic representation and avoids the dense linear systems and storage
    overhead that often arise in deterministic discretizations of fractional
    operators.
    
    \item \textbf{Natural treatment of nonlinearities.}
    The nonlinear reaction term is incorporated through the nonlinear
    Feynman--Kac fixed-point formulation rather than through a residual
    involving fractional differential operators. Successive Picard updates of
    this fixed-point equation are then approximated by neural-network
    regression using Monte Carlo labels. In this way, the nonlinear fractional
    PDE is converted into a sequence of supervised learning problems, making
    the framework naturally suited to semilinear fractional equations and
    potentially extensible to more general nonlinear models.

    \item \textbf{Scalability in high dimensions.}
    The method is well suited to high-dimensional problems because the Monte Carlo approximation of each Picard update is trajectory-based, naturally parallelizable, and not tied to tensor-product spatial grids. Consequently, the method avoids tensor-product spatial grids and the dense
algebraic structures typical of grid-based nonlocal discretizations. This feature is supported by the numerical results reported in Section~\ref{sec:numexp}, where stable and accurate performance is observed up to dimension $d=100$.
\end{itemize}

\section{Preliminaries}\label{sec:prelim}

\subsection{Space-time fractional operators and problem setting}
\label{subsec:frac-ops}

Throughout the paper, we fix the temporal order $\beta\in(0,1)$ and the spatial order 
$\alpha\in(0,2)$, and consider the nonlinear space-time fractional Dirichlet 
problem \eqref{eq:intro-pde} on a bounded domain $\Omega\subset\mathbb{R}^d$ with 
final time $T>0$.

The \emph{Caputo fractional derivative} of order $\beta$ is defined, for sufficiently regular $u(\cdot,x)$, by
\begin{equation}\label{eq:caputo-def}
\partial_t^\beta u(t,x)
=\frac{1}{\Gamma(1-\beta)}\int_0^{t}(t-s)^{-\beta}\,\partial_s u(s,x)\,ds,
\qquad t>0,
\end{equation}
where $\Gamma(\cdot)$ is the Euler Gamma function. The \emph{integral fractional Laplacian} of order $\alpha/2$ is defined, for $u\in\mathcal{S}(\mathbb{R}^d)$, as the principal-value singular integral
\begin{equation}\label{eq:fraclap-def}
(-\Delta)^{\alpha/2} u(x)
= c_{d,\alpha}\,\mathrm{P.V.}\!\int_{\mathbb{R}^d}\frac{u(x)-u(y)}{|x-y|^{d+\alpha}}\,dy,
\qquad
c_{d,\alpha}=\frac{2^{\alpha}\,\Gamma\!\left(\tfrac{d+\alpha}{2}\right)}{\pi^{d/2}\,\bigl|\Gamma(-\alpha/2)\bigr|},
\end{equation}
and extended to more general functions by density. Equivalently, it is the Fourier multiplier
\begin{equation}\label{eq:fraclap-fourier}
\mathcal{F}\bigl[(-\Delta)^{\alpha/2}u\bigr](\xi)
=
|\xi|^{\alpha}\,\widehat u(\xi),
\qquad \xi\in\mathbb{R}^d,
\end{equation}
where $\widehat u=\mathcal{F}u$ denotes the Fourier transform of $u$. It is well known that the operator $-(-\Delta)^{\alpha/2}$ is the
infinitesimal generator of the rotationally symmetric $\alpha$-stable
L\'evy process on $\mathbb{R}^d$, while Caputo time-fractional evolution is
naturally associated with inverse stable time changes generated by
$\beta$-stable subordinators; see, e.g.,
\cite{MeerschaertBensonSchefflerBaeumer2002StochasticSpaceTime,
BaeumerMeerschaert2001StochasticCauchy}.

Under standard assumptions on $(f,g,u_0)$, well-posedness and regularity results for time-fractional and space-time fractional diffusion problems have been established in several settings; see, for example, \cite{Zacher2009WeakSolutions,KSVZ2016DecayEstimates,RosOtonSerra2014Dirichlet,LeonenkoMeerschaertSikorskii2013Fractional,ChenKimSong2012HeatKernels}. These results provide the analytical foundation for the nonlinear probabilistic representation used below.

\subsection{Feynman--Kac-type stochastic fixed-point formulation}
\label{subsec:nonlinear-fk}
We next introduce the stochastic processes that underlie the probabilistic
formulation of \eqref{eq:intro-pde}. The spatial component is tied to the integral fractional Laplacian through the
generator of a rotationally symmetric $\alpha$-stable L\'evy process. Let
$\{X_s^x\}_{s\ge0}$ be such a process in $\mathbb{R}^d$ started from $x\in\Omega$.
Its characteristic function satisfies
\begin{equation}\label{eq:alpha-stable-char}
\mathbb{E}\!\left[
\exp\!\left(i\xi\cdot (X_s^x-x)\right)
\right]
=
\exp\!\left(-s|\xi|^\alpha\right),
\qquad
\xi\in\mathbb{R}^d,\quad s\ge0 .
\end{equation}
Consequently, the infinitesimal generator of $X_s^x$ is
$-(-\Delta)^{\alpha/2}$. For $\alpha=2$, this process reduces, up to a scaling
convention, to Brownian motion, whereas for $\alpha\in(0,2)$ it is a pure-jump
L\'evy process with heavy-tailed increments.

The temporal component is constructed from a one-sided $\beta$-stable
subordinator. Let $\{Y_s^\beta\}_{s\ge0}$ be a $\beta$-stable subordinator,
independent of $X_s^x$, characterized by the Laplace transform
\begin{equation}\label{eq:stable-laplace}
\mathbb{E}\!\left[e^{-\lambda Y_s^\beta}\right]
=
e^{-s\lambda^\beta},
\qquad
\lambda>0,\quad s\ge0 .
\end{equation}
The process $Y_s^\beta$ is nondecreasing and has stationary independent
increments. In time-fractional diffusion, the Caputo derivative is naturally
associated with the first-passage mechanism of such stable subordinators.
For a fixed physical time $t\in[0,T]$, we therefore run $Y_s^\beta$ forward in
the operational time $s$ and stop it when it crosses the level $t$. Equivalently,
we introduce the backward stochastic clock
\begin{equation}\label{eq:physical-clock}
t_s := t-Y_s^\beta .
\end{equation}
As $s$ increases, the accumulated subordinator moves the physical clock
backward from $t$ toward the initial time. The corresponding time-crossing
event is
\begin{equation}\label{eq:exit-time-fractional}
\tau_t
:=
\inf\{s>0:\,t_s\le0\}
=
\inf\{s>0:\,Y_s^\beta\ge t\}.
\end{equation}

The bounded-domain problem requires stopping the coupled trajectory when either
the spatial motion leaves the domain or the stochastic clock reaches the initial
time. We define the spatial exit time
\begin{equation}\label{eq:exit-space}
\tau_\Omega(x)
:=
\inf\{s>0:\,X_s^x\notin\Omega\},
\end{equation}
and combine it with the time-crossing event \(\tau_t\) through
\begin{equation}\label{eq:tau-nonlinear}
\tau(t,x):=\tau_t\wedge\tau_\Omega(x).
\end{equation}
If $\tau_t<\tau_\Omega(x)$, the path reaches the initial-time surface before leaving
the spatial domain, and the initial datum is evaluated. If
$\tau_\Omega(x)\le\tau_t$, the spatial process exits $\Omega$ at a positive physical
time, and the exterior Dirichlet datum is evaluated. Accordingly, we define the
terminal payoff
\begin{equation}\label{eq:terminal-payoff-prelim}
\mathcal{G}_{t,x}
=
u_0\!\bigl(X_{\tau_t}^x\bigr)\mathbf{1}_{\{\tau_t<\tau_\Omega(x)\}}
+
g\!\bigl(t_{\tau_\Omega(x)},X_{\tau_\Omega(x)}^x\bigr)
\mathbf{1}_{\{\tau_\Omega(x)\le\tau_t\}},
\qquad
t_{\tau_\Omega(x)}=t-Y_{\tau_\Omega(x)}^\beta .
\end{equation}

For linear fractional diffusion equations, the above stopped process gives a direct
Feynman--Kac representation. In the present nonlinear problem, the reaction term
depends on the unknown solution itself. Hence the corresponding formula is implicit:
under suitable regularity and integrability assumptions, the solution of
\eqref{eq:intro-pde} satisfies the Feynman--Kac-type stochastic fixed-point equation
\begin{equation}\label{eq:nonlinear-fk}
u(t,x)
=
\mathbb{E}_x\!\left[
\mathcal{G}_{t,x}
+
\int_0^{\tau(t,x)}
f\!\bigl(t_s,u(t_s,X_s^x),X_s^x\bigr)\,\rd s
\right],
\qquad (t,x)\in[0,T]\times\Omega .
\end{equation}
Equivalently, defining the nonlinear expectation operator
\begin{equation}\label{eq:T-operator-prelim}
(\mathcal{T}v)(t,x)
:=
\mathbb{E}_x\!\left[
\mathcal{G}_{t,x}
+
\int_0^{\tau(t,x)}
f\!\bigl(t_s,v(t_s,X_s^x),X_s^x\bigr)\,\rd s
\right],
\end{equation}
the PDE solution is characterized by the fixed-point relation
\begin{equation}\label{eq:fixed-point-prelim}
u=\mathcal{T}u .
\end{equation}

This formulation should therefore be viewed as a stochastic fixed-point equation
rather than as an explicit evaluation formula. Successive Picard updates of
$\mathcal{T}$, with Monte Carlo regression supplying labels along the stopped,
time-changed, jump-driven trajectory $(t_s,X_s^x)$, provide a natural algorithmic
strategy, which is developed in Section~\ref{sec:method}. Before turning to the
algorithm, the next subsection establishes the well-posedness of
\eqref{eq:nonlinear-fk} through a contraction estimate for $\mathcal{T}$.

\subsection{A weighted contraction result}
\label{subsec:weighted-contraction}

We now establish that the operator $\mathcal{T}$ in \eqref{eq:T-operator-prelim},
together with the terminal payoff $\mathcal{G}_{t,x}$ in
\eqref{eq:terminal-payoff-prelim}, is a strict contraction on a suitably weighted
Banach space. Under an exponentially weighted sup-norm, the contraction constant
is independent of the size of the stopping time $\tau(t,x)$, which removes the
restrictive smallness condition based on
$\sup_{(t,x)}\mathbb{E}[\tau(t,x)]$ that typically appears in the unweighted
setting.

For $\lambda>0$, define the weighted space
\[
\mathcal{X}_\lambda
:=
\Bigl\{
v:[0,T]\times\overline{\Omega}\to\mathbb{R}\ \text{measurable}:\ 
\|v\|_\lambda<\infty
\Bigr\},
\]
equipped with the norm
\begin{equation}\label{eq:weighted-norm-final}
\|v\|_\lambda
:=
\sup_{(t,x)\in[0,T]\times\overline{\Omega}}
e^{-\lambda t}|v(t,x)|.
\end{equation}
Since $T<\infty$, the weighted norm $\|\cdot\|_\lambda$ is equivalent to the usual sup-norm on $[0,T]\times\overline{\Omega}$, and hence $(\mathcal{X}_\lambda,\|\cdot\|_\lambda)$ is a Banach space.

We impose the following assumptions.

\begin{assumption}\label{ass:weighted-contraction}
The following conditions hold.
\begin{itemize}
    \item[(A1)] The nonlinearity $f:[0,T]\times\mathbb{R}\times\Omega\to\mathbb{R}$ is measurable and globally Lipschitz continuous in its second variable, uniformly in $(t,x)$, i.e.,
    \begin{equation}\label{eq:f-global-lip-final}
    |f(t,u,x)-f(t,v,x)|
    \le L_f |u-v|,
    \qquad
    \forall (t,x)\in[0,T]\times\Omega,\ \forall u,v\in\mathbb{R},
    \end{equation}
    for some constant $L_f>0$.
    
    \item[(A2)] The zero-level section of $f$ is bounded:
    \begin{equation}\label{eq:f-zero-bounded-final}
    M_f:=\sup_{(t,x)\in[0,T]\times\Omega}|f(t,0,x)|<\infty.
    \end{equation}
    
    \item[(A3)] The initial datum and boundary datum are bounded:
    \begin{equation}\label{eq:data-bounded-final}
    M_0:=\sup_{x\in\Omega}|u_0(x)|<\infty,
    \qquad
    M_g:=\sup_{(t,x)\in[0,T]\times\Omega^c}|g(t,x)|<\infty.
    \end{equation}
\end{itemize}
\end{assumption}
\begin{remark}
The global Lipschitz condition in Assumption~\ref{ass:weighted-contraction}
is a sufficient condition for the contraction argument. Some polynomial
nonlinearities in Section~\ref{sec:numexp} do not satisfy this condition
globally, and are included as numerical tests beyond the theoretical setting.
\end{remark}

We now state the main result of this subsection.

\begin{theorem}[Weighted contraction of the nonlinear Feynman--Kac operator]
\label{thm:weighted-contraction-final}
Suppose that Assumption~\ref{ass:weighted-contraction} holds. Then, for every $\lambda>0$, the operator $\mathcal{T}$ maps $\mathcal{X}_\lambda$ into itself. Moreover, for all $v,w\in\mathcal{X}_\lambda$,
\begin{equation}\label{eq:weighted-contraction-est-final}
\|\mathcal{T}v-\mathcal{T}w\|_\lambda
\le
\frac{L_f}{\lambda^\beta}\,\|v-w\|_\lambda .
\end{equation}
In particular, if
\begin{equation}\label{eq:lambda-condition-final}
\lambda^\beta>L_f,
\end{equation}
then $\mathcal{T}$ is a strict contraction on $\mathcal{X}_\lambda$.
\end{theorem}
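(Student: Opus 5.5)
The plan is a direct estimate of $\mathcal{T}v-\mathcal{T}w$, using the independence of $X^x$ and $Y^\beta$ and the Laplace transform \eqref{eq:stable-laplace} of the subordinator. The payoff $\mathcal{G}_{t,x}$ in \eqref{eq:terminal-payoff-prelim} does not depend on $v$, so it cancels in the difference. What remains is the running integral over $[0,\tau(t,x)]$. For $s<\tau(t,x)\le\tau_t$ we have $Y_s^\beta<t$, hence $t_s\in(0,t]$, and $X_s^x\in\Omega$. So $v(t_s,X_s^x)$ is evaluated inside $[0,T]\times\overline{\Omega}$, where the weighted norm controls it.

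\emph{Contraction estimate.} By (A1), for every $s<\tau(t,x)$,
\[
\bigl|f(t_s,v(t_s,X_s^x),X_s^x)-f(t_s,w(t_s,X_s^x),X_s^x)\bigr|
\le L_f\,e^{\lambda t_s}\|v-w\|_\lambda .
\]
I then enlarge $\tau(t,x)$ to $\tau_t$. Since $s<\tau_t$ exactly when $Y_s^\beta<t$ (up to a null set), Tonelli's theorem gives
\[
|\mathcal{T}v(t,x)-\mathcal{T}w(t,x)|
\le L_f\|v-w\|_\lambda\, e^{\lambda t}\int_0^\infty \mathbb{E}\bigl[e^{-\lambda Y_s^\beta}\mathbf{1}_{\{Y_s^\beta<t\}}\bigr]\,\rd s .
\]
Dropping the indicator and using \eqref{eq:stable-laplace}, the integral is at most $\int_0^\infty e^{-s\lambda^\beta}\,\rd s=\lambda^{-\beta}$. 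Multiplying by $e^{-\lambda t}$ and taking the supremum over $(t,x)$ yields \eqref{eq:weighted-contraction-est-final}. The constant is independent of $\Omega$ and of $\mathbb{E}\tau$, because the factor $e^{\lambda t_s}=e^{\lambda t}e^{-\lambda Y_s^\beta}$ converts the time integral into the Laplace transform of the subordinator.

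\emph{Self-map.} First, $|\mathcal{G}_{t,x}|\le M_0+M_g$ by (A3). Next, (A1)–(A2) give $|f(t_s,v(t_s,X_s^x),X_s^x)|\le M_f+L_f e^{\lambda t_s}\|v\|_\lambda$. For the $M_f$ term I bound $\mathbb{E}\tau_t=\int_0^\infty\mathbb{P}(Y_s^\beta<t)\,\rd s$. Applying Chebyshev's exponential inequality with parameter $1$ gives $\mathbb{E}\tau_t\le e^{t}\int_0^\infty e^{-s}\,\rd s=e^{t}\le e^{T}$; alternatively one can use the exact value $t^\beta/\Gamma(1+\beta)$. The $L_f$ term is handled exactly as in the contraction step. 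Together these give
\[
\|\mathcal{T}v\|_\lambda\le M_0+M_g+M_fe^{T}+L_f\lambda^{-\beta}\|v\|_\lambda<\infty .
\]
Under the condition $\lambda^\beta>L_f$ in \eqref{eq:lambda-condition-final}, strict contraction follows immediately.

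\emph{Main obstacle.} The delicate step is technical rather than computational. It consists of justifying measurability of $(t,x)\mapsto\mathcal{T}v(t,x)$ and of the integrand $(s,\omega)\mapsto f(t_s,v(t_s,X_s^x),X_s^x)$, which needs progressive measurability of the càdlàg processes and joint measurability of $v$ and $f$. It also requires the Fubini/Tonelli exchange. I expect to handle this by applying Tonelli to the nonnegative bounds first, which makes all expectations finite and the exchange legitimate. Measurability in $(t,x)$ should follow from $\mathcal{T}v$ being the expectation of a jointly measurable functional of $(t,x,\omega)$, using the strong Markov structure. If needed, I would restrict to the Borel-measurable versions that the definition of $\mathcal{X}_\lambda$ permits.
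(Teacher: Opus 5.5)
Your proposal is correct and follows essentially the same route as the paper. You cancel the payoff, bound $|v-w|(t_s,X_s^x)\le e^{\lambda t_s}\|v-w\|_\lambda$ on $\{s<\tau(t,x)\}$, rewrite $e^{-\lambda t}e^{\lambda t_s}=e^{-\lambda Y_s^\beta}$, and apply Tonelli with the Laplace transform \eqref{eq:stable-laplace} to get $\lambda^{-\beta}$. The only real difference is in the self-map step: you bound $\mathbb{E}\tau_t\le e^{T}$ by the exponential Markov inequality with parameter $1$, whereas the paper uses the same inequality with parameter $\lambda$, namely $e^{-\lambda t}\le e^{-\lambda Y_s^\beta}$ on $\{s<\tau(t,x)\}$, to get the $T$-independent constant $M_f\lambda^{-\beta}$. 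Either bound suffices for $\mathcal{T}(\mathcal{X}_\lambda)\subset\mathcal{X}_\lambda$.
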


\begin{proof}
The proof is divided into two steps.

\medskip
\noindent
\textbf{Step 1. Weighted Lipschitz estimate.}
Let $v,w\in\mathcal{X}_\lambda$ be arbitrary. Fix $(t,x)\in[0,T]\times\Omega$. Since the terminal payoff $\mathcal{G}_{t,x}$ is independent of the function argument of $\mathcal{T}$, we have
\begin{align}
(\mathcal{T}v)(t,x)-(\mathcal{T}w)(t,x)
&=
\mathbb{E}_x\!\left[
\int_0^{\tau(t,x)}
\Bigl(
f\!\bigl(t_s,v(t_s,X_s^x),X_s^x\bigr)
-
f\!\bigl(t_s,w(t_s,X_s^x),X_s^x\bigr)
\Bigr)\,\mathrm{d}s
\right].
\label{eq:weighted-proof-diff-final}
\end{align}
Taking absolute values and using \eqref{eq:f-global-lip-final}, we obtain
\begin{align}
|(\mathcal{T}v)(t,x)-(\mathcal{T}w)(t,x)|
&\le
L_f\,
\mathbb{E}_x\!\left[
\int_0^{\tau(t,x)}
|v(t_s,X_s^x)-w(t_s,X_s^x)|
\,\mathrm{d}s
\right].
\label{eq:weighted-proof-lip-final}
\end{align}
For every $0\le s<\tau(t,x)$, by the definition of the stopping time $\tau(t,x)=\tau_t(x)\wedge \tau_\Omega(x)$, we have
\[
t_s>0,
\qquad
X_s^x\in\Omega.
\]
Hence $(t_s,X_s^x)\in[0,T]\times\overline{\Omega}$, and by the definition of the weighted norm,
\begin{equation}\label{eq:weighted-pointwise-final}
|v(t_s,X_s^x)-w(t_s,X_s^x)|
\le
e^{\lambda t_s}\|v-w\|_\lambda .
\end{equation}
Substituting \eqref{eq:weighted-pointwise-final} into \eqref{eq:weighted-proof-lip-final}, and multiplying both sides by $e^{-\lambda t}$, yield
\begin{align}
e^{-\lambda t}|(\mathcal{T}v)(t,x)-(\mathcal{T}w)(t,x)|
&\le
L_f\|v-w\|_\lambda\,
\mathbb{E}_x\!\left[
\int_0^{\tau(t,x)}
e^{-\lambda t}e^{\lambda t_s}\,\mathrm{d}s
\right]
\nonumber\\
&=
L_f\|v-w\|_\lambda\,
\mathbb{E}_x\!\left[
\int_0^{\tau(t,x)}
e^{-\lambda(t-t_s)}\,\mathrm{d}s
\right].
\label{eq:weighted-proof-after-weight-final}
\end{align}
Since $t_s=t-Y_s^\beta$, we have
\(
t-t_s=Y_s^\beta,
\)
and therefore
\begin{equation}\label{eq:weighted-proof-before-tonelli-final}
e^{-\lambda t}|(\mathcal{T}v)(t,x)-(\mathcal{T}w)(t,x)|
\le
L_f\|v-w\|_\lambda\,
\mathbb{E}_x\!\left[
\int_0^{\tau(t,x)}
e^{-\lambda Y_s^\beta}\,\mathrm{d}s
\right].
\end{equation}
Since the integrand is nonnegative, Tonelli's theorem gives
\begin{align}
\mathbb{E}_x\!\left[
\int_0^{\tau(t,x)} e^{-\lambda Y_s^\beta}\,\mathrm{d}s
\right]
&=
\mathbb{E}_x\!\left[
\int_0^\infty \mathbf{1}_{\{s<\tau(t,x)\}} e^{-\lambda Y_s^\beta}\,\mathrm{d}s
\right]
\nonumber\\
&=
\int_0^\infty
\mathbb{E}_x\!\left[
\mathbf{1}_{\{s<\tau(t,x)\}} e^{-\lambda Y_s^\beta}
\right]
\,\mathrm{d}s
\nonumber\\
&\le
\int_0^\infty
\mathbb{E}\!\left[e^{-\lambda Y_s^\beta}\right]
\,\mathrm{d}s
\nonumber\\
&=
\int_0^\infty e^{-s\lambda^\beta}\,\mathrm{d}s
=
\frac{1}{\lambda^\beta},
\label{eq:weighted-proof-core-est-final}
\end{align}
where in the last step we used \eqref{eq:stable-laplace}. Combining \eqref{eq:weighted-proof-before-tonelli-final} and \eqref{eq:weighted-proof-core-est-final}, we obtain
\[
e^{-\lambda t}|(\mathcal{T}v)(t,x)-(\mathcal{T}w)(t,x)|
\le
\frac{L_f}{\lambda^\beta}\,\|v-w\|_\lambda.
\]
Taking the supremum over $(t,x)\in[0,T]\times\Omega$ gives
\[
\|\mathcal{T}v-\mathcal{T}w\|_\lambda
\le
\frac{L_f}{\lambda^\beta}\,\|v-w\|_\lambda,
\]
which proves \eqref{eq:weighted-contraction-est-final}.

\medskip
\noindent
\textbf{Step 2. The mapping property $\mathcal{T}(\mathcal{X}_\lambda)\subset\mathcal{X}_\lambda$.}
Let $v\in\mathcal{X}_\lambda$ and fix $(t,x)\in[0,T]\times\Omega$. By \eqref{eq:T-operator-prelim},
\begin{align}
|(\mathcal{T}v)(t,x)|
&\le
\mathbb{E}_x|\mathcal{G}_{t,x}|
+
\mathbb{E}_x\!\left[
\int_0^{\tau(t,x)}
\left|f\!\bigl(t_s,v(t_s,X_s^x),X_s^x\bigr)\right|
\,\mathrm{d}s
\right].
\label{eq:mapping-step1-final}
\end{align}
By \eqref{eq:f-global-lip-final} and \eqref{eq:f-zero-bounded-final},
\[
|f(t_s,v(t_s,X_s^x),X_s^x)|
\le
L_f|v(t_s,X_s^x)|+M_f.
\]
Hence
\begin{align}
|(\mathcal{T}v)(t,x)|
&\le
\mathbb{E}_x|\mathcal{G}_{t,x}|
+
L_f\,
\mathbb{E}_x\!\left[
\int_0^{\tau(t,x)}
|v(t_s,X_s^x)|\,\mathrm{d}s
\right]
+
M_f\,\mathbb{E}_x[\tau(t,x)].
\label{eq:mapping-step2-rough-final}
\end{align}
Instead of estimating the last term by $\mathbb{E}[\tau(t,x)]$, we exploit the same exponential weight as in Step~1. Multiplying \eqref{eq:mapping-step1-final} by $e^{-\lambda t}$, and using \eqref{eq:weighted-norm-final}, we obtain
\begin{align}
e^{-\lambda t}|(\mathcal{T}v)(t,x)|
&\le
e^{-\lambda t}\mathbb{E}_x|\mathcal{G}_{t,x}|
+
L_f\|v\|_\lambda
\mathbb{E}_x\!\left[
\int_0^{\tau(t,x)}
e^{-\lambda(t-t_s)}\,\mathrm{d}s
\right]
\nonumber\\
&\qquad
+
M_f\,
\mathbb{E}_x\!\left[
\int_0^{\tau(t,x)} e^{-\lambda t}\,\mathrm{d}s
\right].
\label{eq:mapping-step3-final}
\end{align}
We next estimate the three terms on the right-hand side.

First, by \eqref{eq:data-bounded-final} and the definition \eqref{eq:terminal-payoff-prelim},
\[
|\mathcal{G}_{t,x}|
\le
M_0\,\mathbf{1}_{\{\tau_t<\tau_\Omega(x)\}}
+
M_g\,\mathbf{1}_{\{\tau_\Omega(x)\le \tau_t\}}.
\]

Therefore,
\begin{equation}\label{eq:terminal-est-final}
e^{-\lambda t}\mathbb{E}_x|\mathcal{G}_{t,x}|
\le
\max\{M_0,M_g\}.
\end{equation}

Second, the same estimate as in \eqref{eq:weighted-proof-core-est-final} yields
\begin{equation}\label{eq:mapping-second-est-final}
L_f\|v\|_\lambda
\mathbb{E}_x\!\left[
\int_0^{\tau(t,x)}
e^{-\lambda(t-t_s)}\,\mathrm{d}s
\right]
\le
\frac{L_f}{\lambda^\beta}\|v\|_\lambda.
\end{equation}

Third, for $0\le s<\tau(t,x)$ we have $t_s\ge 0$, and hence
\[
e^{-\lambda t}
=
e^{-\lambda t_s}e^{-\lambda(t-t_s)}
\le
e^{-\lambda(t-t_s)}
=
e^{-\lambda Y_s^\beta}.
\]
Thus,
\begin{align}
M_f\,
\mathbb{E}_x\!\left[
\int_0^{\tau(t,x)} e^{-\lambda t}\,\mathrm{d}s
\right]
&\le
M_f\,
\mathbb{E}_x\!\left[
\int_0^{\tau(t,x)} e^{-\lambda Y_s^\beta}\,\mathrm{d}s
\right]
\le
\frac{M_f}{\lambda^\beta}.
\label{eq:mapping-third-est-final}
\end{align}
Combining \eqref{eq:mapping-step3-final}--\eqref{eq:mapping-third-est-final}, we arrive at
\[
e^{-\lambda t}|(\mathcal{T}v)(t,x)|
\le
\max\{M_0,M_g\}
+
\frac{L_f}{\lambda^\beta}\|v\|_\lambda
+
\frac{M_f}{\lambda^\beta}.
\]
Taking the supremum over $(t,x)\in[0,T]\times\Omega$, we obtain
\begin{equation}\label{eq:mapping-bound-final}
\|\mathcal{T}v\|_\lambda
\le
\max\{M_0,M_g\}
+
\frac{L_f}{\lambda^\beta}\|v\|_\lambda
+
\frac{M_f}{\lambda^\beta}
<\infty.
\end{equation}
Hence $\mathcal{T}v\in\mathcal{X}_\lambda$, i.e., $\mathcal{T}$ maps $\mathcal{X}_\lambda$ into itself.

The proof is complete.
\end{proof}

As an immediate consequence, we obtain the following fixed-point result.

\begin{corollary}[Existence and uniqueness in the weighted space]
\label{cor:weighted-fixed-point}
Suppose that Assumption~\ref{ass:weighted-contraction} holds, and let $\lambda>0$ satisfy $\lambda^\beta>L_f$. Then the operator $\mathcal{T}$ admits a unique fixed point $u\in\mathcal{X}_\lambda$, namely,
\[
u=\mathcal{T}u.
\]
Equivalently, the stochastic fixed-point equation \eqref{eq:nonlinear-fk}
has a unique solution in $\mathcal{X}_\lambda$.
\end{corollary}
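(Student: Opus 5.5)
The corollary follows from Theorem~\ref{thm:weighted-contraction-final} and the Banach fixed-point theorem. The plan has three steps.

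\textbf{Step 1: completeness of the space.} Check that $(\mathcal{X}_\lambda,\|\cdot\|_\lambda)$ is complete. On $[0,T]\times\overline{\Omega}$ we have $e^{-\lambda T}\|v\|_\infty\le\|v\|_\lambda\le\|v\|_\infty$. So a $\|\cdot\|_\lambda$-Cauchy sequence is uniformly Cauchy and converges uniformly to a function $v$. This limit is measurable as a pointwise limit of measurable functions, and it is bounded. Hence $v\in\mathcal{X}_\lambda$, and the convergence also holds in $\|\cdot\|_\lambda$. The paper already asserts this, so only this short check is needed.

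\textbf{Step 2: contraction.} By Theorem~\ref{thm:weighted-contraction-final}, $\mathcal{T}$ maps $\mathcal{X}_\lambda$ into itself. By \eqref{eq:weighted-contraction-est-final} it is Lipschitz with constant $q:=L_f/\lambda^\beta$, and $q<1$ under \eqref{eq:lambda-condition-final}.

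\textbf{Step 3: Banach fixed-point theorem.} This gives a unique $u\in\mathcal{X}_\lambda$ with $u=\mathcal{T}u$. Moreover, the Picard iterates $u^{(n+1)}=\mathcal{T}u^{(n)}$ converge to $u$ from any starting point, with
\[
\|u^{(n)}-u\|_\lambda\le \frac{q^n}{1-q}\,\|u^{(1)}-u^{(0)}\|_\lambda ,
\]
which is worth recording because it motivates the algorithm. Uniqueness can also be seen directly: if $u=\mathcal{T}u$ and $w=\mathcal{T}w$, then $\|u-w\|_\lambda\le q\|u-w\|_\lambda$, which forces $\|u-w\|_\lambda=0$.

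\textbf{Equivalence with \eqref{eq:nonlinear-fk}.} Note that \eqref{eq:nonlinear-fk} is exactly the statement $u=\mathcal{T}u$ by the definition \eqref{eq:T-operator-prelim}. The one subtle point is the domain. The fixed-point equation is imposed on $[0,T]\times\Omega$, whereas elements of $\mathcal{X}_\lambda$ are defined on $[0,T]\times\overline{\Omega}$.

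The values of $v$ on $\partial\Omega$ or at $t_s=0$ never enter $\mathcal{T}v$. This is because, for $s<\tau(t,x)$, we have $X_s^x\in\Omega$ and $t_s>0$, and the time integral ignores the endpoint. I would therefore either interpret $\mathcal{T}v$ on $\partial\Omega$ through the same formula, with $\tau=0$ there so that $\mathcal{T}v$ equals the boundary datum, or identify functions that agree on $[0,T]\times\Omega$. This bookkeeping about the boundary values is the only mild obstacle. The rest is a direct application of Banach's theorem.
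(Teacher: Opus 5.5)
Your proposal is correct and follows the same route as the paper. The paper invokes Theorem~\ref{thm:weighted-contraction-final} to obtain a strict contraction on the Banach space $(\mathcal{X}_\lambda,\|\cdot\|_\lambda)$ and then applies the Banach fixed-point theorem. Your explicit completeness check via $e^{-\lambda T}\|v\|_\infty\le\|v\|_\lambda\le\|v\|_\infty$ and your remark on how $\mathcal{T}v$ should be understood on $[0,T]\times\partial\Omega$ are sound refinements of details the paper leaves implicit (its theorem proof takes the supremum only over $[0,T]\times\Omega$), not a different argument.
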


\begin{proof}
By Theorem~\ref{thm:weighted-contraction-final}, $\mathcal{T}$ is a strict contraction on the Banach space $(\mathcal{X}_\lambda,\|\cdot\|_\lambda)$. The conclusion follows directly from the Banach fixed-point theorem.
\end{proof}

\begin{corollary}[Convergence of the relaxed Picard iteration]
\label{cor:relaxed-picard}
Suppose that Assumption~\ref{ass:weighted-contraction} holds, and let $\lambda>0$ and $\eta\in(0,1]$. Define
\begin{equation}\label{eq:relaxed-operator}
\mathcal{T}_\eta v := (1-\eta)v+\eta\mathcal{T}v,
\qquad v\in\mathcal{X}_\lambda.
\end{equation}
If $\lambda^\beta>L_f$, then $\mathcal{T}_\eta$ is a strict contraction on $\mathcal{X}_\lambda$ and has the same fixed point as $\mathcal{T}$. Consequently, the iteration
\begin{equation}\label{eq:relaxed-picard}
u^{(k+1)}=(1-\eta)u^{(k)}+\eta\mathcal{T}u^{(k)}
\end{equation}
converges in $\mathcal{X}_\lambda$ to the unique solution of \eqref{eq:nonlinear-fk}.
\end{corollary}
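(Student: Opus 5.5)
The plan is to reduce everything to Theorem~\ref{thm:weighted-contraction-final} together with the Banach fixed-point theorem, exactly as in Corollary~\ref{cor:weighted-fixed-point}.

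\textbf{Self-mapping.} First, $\mathcal{T}_\eta$ maps $\mathcal{X}_\lambda$ into itself. By Theorem~\ref{thm:weighted-contraction-final}, $\mathcal{T}v\in\mathcal{X}_\lambda$, and $\mathcal{X}_\lambda$ is a vector space. Hence the convex combination $(1-\eta)v+\eta\mathcal{T}v$ is measurable and has finite weighted norm.

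\textbf{Contraction estimate.} For $v,w\in\mathcal{X}_\lambda$, the triangle inequality and \eqref{eq:weighted-contraction-est-final} give
\[
\|\mathcal{T}_\eta v-\mathcal{T}_\eta w\|_\lambda
\le (1-\eta)\|v-w\|_\lambda+\eta\|\mathcal{T}v-\mathcal{T}w\|_\lambda
\le \Bigl(1-\eta+\eta\frac{L_f}{\lambda^\beta}\Bigr)\|v-w\|_\lambda .
\]
Set $q_\eta:=1-\eta\bigl(1-L_f\lambda^{-\beta}\bigr)$. Since $\lambda^\beta>L_f$ and $\eta\in(0,1]$, we have $0<1-L_f\lambda^{-\beta}\le 1$. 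Therefore $q_\eta\in[L_f\lambda^{-\beta},1)$, and $\mathcal{T}_\eta$ is a strict contraction. This is the only point where the restriction $\eta>0$ is essential: for $\eta=0$ the constant degenerates to $1$.

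\textbf{Same fixed points.} Next, $\mathcal{T}_\eta$ and $\mathcal{T}$ have the same fixed points. Indeed, $v=\mathcal{T}_\eta v$ is equivalent to $\eta(\mathcal{T}v-v)=0$, which is equivalent to $v=\mathcal{T}v$, since $\eta\neq0$.

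\textbf{Convergence.} Applying the Banach fixed-point theorem to $\mathcal{T}_\eta$ on the Banach space $(\mathcal{X}_\lambda,\|\cdot\|_\lambda)$ yields a unique fixed point. By the previous step, it coincides with the fixed point $u$ of $\mathcal{T}$ from Corollary~\ref{cor:weighted-fixed-point}, i.e.\ the unique solution of \eqref{eq:nonlinear-fk}. The same theorem gives convergence of \eqref{eq:relaxed-picard} from any $u^{(0)}\in\mathcal{X}_\lambda$, with the rate
\[
\|u^{(k)}-u\|_\lambda\le q_\eta^k\|u^{(0)}-u\|_\lambda .
\]
Because $T<\infty$, the weighted norm is equivalent to the sup-norm. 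Convergence therefore also holds uniformly on $[0,T]\times\overline{\Omega}$.

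There is no real obstacle here. The only care needed is to verify that $q_\eta<1$ for every admissible $\eta$, and to note that the fixed point does not depend on $\lambda$, since the spaces $\mathcal{X}_\lambda$ coincide as sets.
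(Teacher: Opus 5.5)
Your proposal is correct and follows essentially the same route as the paper: the triangle inequality combined with the weighted estimate of Theorem~\ref{thm:weighted-contraction-final} gives the factor $1-\eta+\eta L_f\lambda^{-\beta}<1$, the fixed points of $\mathcal{T}_\eta$ and $\mathcal{T}$ coincide because $\eta\neq 0$, and the Banach fixed-point theorem concludes. Your explicit self-mapping check, the one-line verification of the fixed-point equivalence, and the geometric rate $q_\eta^k$ are details the paper leaves implicit.
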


\begin{proof}
By Theorem~\ref{thm:weighted-contraction-final}, for all $v,w\in\mathcal{X}_\lambda$,
\[
\|\mathcal{T}v-\mathcal{T}w\|_\lambda
\le
\frac{L_f}{\lambda^\beta}\|v-w\|_\lambda.
\]
Hence
\[
\|\mathcal{T}_\eta v-\mathcal{T}_\eta w\|_\lambda
\le
(1-\eta)\|v-w\|_\lambda
+
\eta\|\mathcal{T}v-\mathcal{T}w\|_\lambda
\le
\left(1-\eta+\eta\frac{L_f}{\lambda^\beta}\right)\|v-w\|_\lambda.
\]
Since $\lambda^\beta>L_f$, the contraction factor is strictly less than $1$, and thus $\mathcal{T}_\eta$ is a strict contraction.

Finally, $\mathcal{T}_\eta$ and $\mathcal{T}$ clearly share the same fixed point. The convergence follows from the Banach fixed-point theorem.
\end{proof}

The contraction estimate of Theorem~\ref{thm:weighted-contraction-final} and
the resulting unique fixed point in Corollary~\ref{cor:weighted-fixed-point}
provide the analytical foundation for the Deep Picard scheme developed in
Section~\ref{subsec:method:dpi}, which realizes successive applications of
$\mathcal{T}$ through neural regression; the relaxed iteration of
Corollary~\ref{cor:relaxed-picard} additionally justifies the
relaxation-based stabilization built into the algorithm.

\section{Methodology}\label{sec:method}

This section turns \eqref{eq:nonlinear-fk} into an algorithm in two steps.
Section~\ref{subsec:sim-paths} explains how to simulate the underlying
trajectories, and Section~\ref{subsec:method:dpi} uses these trajectories to
train a neural network that approximates the fixed point of $\mathcal{T}$.

\subsection{Numerical simulation of the coupled space-time motion}
\label{subsec:sim-paths}

We describe the numerical approximation of the coupled stochastic processes
appearing in the nonlinear representation \eqref{eq:nonlinear-fk}, namely the
one-sided $\beta$-stable subordinator that drives the stochastic clock and the
rotationally symmetric $\alpha$-stable L\'evy process associated with the
fractional Laplacian. Throughout this section, we fix an operational-time step
size $\Delta s>0$ and focus on constructions that remain stable and efficient
in high spatial dimensions.

\subsubsection{Simulation of the $\beta$-stable subordinator}

To approximate the time-crossing event
\[
\tau_t=\inf\{s>0:\,Y_s^\beta\ge t\},
\]
we discretize the operational time by a uniform grid
\begin{equation}\label{eq:grid-rewrite}
0=s_0<s_1<s_2<\cdots,\qquad s_i=i\Delta s .
\end{equation}
Exploiting the stationary independent increments and self-similarity of stable
subordinators, the increments of $Y^\beta$ are generated recursively as
\begin{equation}\label{eq:Y-recursion-rewrite}
Y^\beta(s_i)
=
Y^\beta(s_{i-1})
+
(\Delta s)^{1/\beta}\,\eta_i,
\qquad
\eta_i\stackrel{\mathrm{i.i.d.}}{\sim} S_\beta^+,
\end{equation}
where $S_\beta^+$ denotes the positive $\beta$-stable law normalized by
\[
\mathbb{E}\!\left[e^{-\lambda\eta_i}\right]
=
e^{-\lambda^\beta},
\qquad \lambda>0.
\]
The discretized time-crossing index is then defined by
\begin{equation}\label{eq:tau-physical-rewrite}
i_\star(t)
=
\min\{i\ge0:\,Y^\beta(s_i)\ge t\},
\qquad
\tau_t\approx s_{i_\star(t)}=i_\star(t)\Delta s .
\end{equation}
The corresponding backward physical-time trajectory along the operational grid is
\[
t_{s_i}=t-Y^\beta(s_i).
\]
This construction provides a numerical mechanism for detecting the event
$\{t_s\le0\}$ and is coupled with the spatial exit event through the joint
stopping time
\[
\tau(t,x)=\tau_t\wedge\tau_\Omega(x).
\]

\subsubsection{Walk-on-spheres simulation of the rotationally symmetric $\alpha$-stable process}

We next approximate the spatial motion of the $\alpha$-stable process using a
walk-on-spheres-type strategy adapted to fractional Laplacian operators
\cite{KyprianouOsojnikShardlow2017UnbiasedWoS,ShengSuXu2023EfficientMCIFL}.
Starting from the current position, we choose a ball radius so that the mean
exit time of the $\alpha$-stable process from the ball matches the
operational-time increment $\Delta s$. The exit location becomes the next
center of the spatial trajectory, and the procedure is repeated until a
stopping event is triggered.

Let $c\in\mathbb{R}^d$ and denote by
\[
B_r(c)=\{y\in\mathbb{R}^d:\|y-c\|\le r\}
\]
the ball of radius $r>0$ centered at $c$. For the centered $\alpha$-stable
process $X^\alpha$ with $X^\alpha(0)=0$, define
\begin{equation}\label{eq:first-exit-rewrite}
\tau_r
:=
\inf\{s>0:\,X^\alpha(s)\notin B_r(0)\}.
\end{equation}
The first two moments of $\tau_r$ are available in closed form
\cite{getoor1961first}:
\begin{equation}\label{eq:exit-moments-rewrite}
\mathbb{E}[\tau_r]=r^\alpha \kappa_{d,\alpha},
\qquad
\mathbb{E}[\tau_r^2]
=
\alpha r^\alpha \kappa_{d,\alpha}^{\,2}
\int_{0}^{r^2}
\nu^{\frac{\alpha}{2}-1}\,
{}_2F_1\!\left(
-\frac{\alpha}{2},\frac{d}{2};
\frac{d+\alpha}{2};
\nu r^{-2}
\right)\,\mathrm{d}\nu,
\end{equation}
where
\[
\kappa_{d,\alpha}
=
\frac{\Gamma(\frac{d}{2})}
{2^\alpha\Gamma(1+\frac{\alpha}{2})\Gamma(\frac{d+\alpha}{2})}.
\]
Here ${}_2F_1$ denotes the Gauss hypergeometric function. Its Euler-type
integral representation is
\begin{equation}\label{eq:2F1-def-rewrite}
{}_2F_1(a,b;c;z)
=
\frac{\Gamma(c)}{\Gamma(b)\Gamma(c-b)}
\int_0^1
\xi^{b-1}(1-\xi)^{c-b-1}(1-z\xi)^{-a}\,\mathrm{d}\xi .
\end{equation}
Moreover, Chebyshev's inequality together with
\eqref{eq:exit-moments-rewrite} implies that, for any $\varepsilon>0$,
\begin{equation}\label{eq:exit-prob-rewrite}
\mathbb{P}\!\left(
\big|\tau_r-\mathbb{E}[\tau_r]\big|\le \varepsilon
\right)
\ge
1-\frac{r^{2\alpha}\kappa_{d,\alpha}^{\,2}}{\varepsilon^2},
\qquad r\downarrow0.
\end{equation}
Thus $\tau_r$ concentrates around its mean in the small-radius regime.

Matching the expected exit time with the operational step size,
$\mathbb{E}[\tau_r]=\Delta s$, yields the radius
\begin{equation}\label{eq:radius-ball}
r
=
\left(
\frac{\Delta s}{\kappa_{d,\alpha}}
\right)^{1/\alpha}.
\end{equation}
Hence each spatial update represents, on average, an operational-time advance
of length $\Delta s$.

It remains to sample the exit location from the ball. By rotational symmetry,
we write the exit point relative to the current center as $J\vartheta$, where
$\vartheta\sim\mathrm{Unif}(\mathbb{S}^{d-1})$ and $J>r$ is the radial exit
distance. For a process starting from the center of $B_r(0)$, the radial
distribution satisfies, for $\gamma>r$,
\begin{equation}\label{eq:jump-cdf-rewrite}
\mathbb{P}\bigl(r<|X^\alpha_{\tau_r}|<\gamma\bigr)
=
\frac{\sin(\pi\alpha/2)}{\pi}
\left[
\mathrm{B}\!\left(1-\frac{\alpha}{2},\frac{\alpha}{2}\right)
-
\mathrm{B}\!\left(
\frac{r^2}{\gamma^2};
1-\frac{\alpha}{2},\frac{\alpha}{2}
\right)
\right],
\end{equation}
where $\mathrm{B}(z;a,b)$ denotes the incomplete Beta function. Therefore, if
$\omega\sim\mathrm{Unif}(0,1)$, inverse-transform sampling gives
\begin{equation}\label{eq:jump-distance-rewrite}
J
=
\frac{r}{
\sqrt{
\mathrm{B}^{-1}\!\left(
\mathrm{B}\!\left(1-\frac{\alpha}{2},\frac{\alpha}{2}\right)
-\frac{\pi\omega}{\sin(\pi\alpha/2)}
\ ;\
1-\frac{\alpha}{2},\frac{\alpha}{2}
\right)
}},
\end{equation}
where $\mathrm{B}^{-1}(\cdot\,;a,b)$ denotes the inverse of
$z\mapsto \mathrm{B}(z;a,b)$. The numerical spatial chain is then updated by
\begin{equation}\label{eq:wos-update}
X_i=X_{i-1}+J_i\,\vartheta_i,
\qquad X_i\approx X^\alpha_{s_i},
\end{equation}
with i.i.d.\ directions $\vartheta_i\sim\mathrm{Unif}(\mathbb{S}^{d-1})$ and
jump lengths $J_i$ generated by \eqref{eq:jump-distance-rewrite}. In practice,
the directions can be sampled by drawing $G_i\sim\mathcal{N}(0,I_d)$ and
setting $\vartheta_i=G_i/\|G_i\|$.

Altogether, the above procedure yields a mesh-free trajectory simulator suitable
for high-dimensional coupled space-time fractional dynamics.

\subsection{Deep Picard iteration for space-time fractional diffusion PDEs}
\label{subsec:method:dpi}

We approximate the solution of the nonlinear space-time fractional problem
\eqref{eq:intro-pde} by learning the fixed point of the nonlinear
Feynman--Kac operator induced by the coupled processes $(X_s^x,Y_s^\beta)$.
For any measurable function $v:[0,T]\times\overline{\Omega}\to\mathbb{R}$,
define
\begin{equation}\label{eq:T-operator-dpi}
(\mathcal{T}v)(t,x)
:=
\mathbb{E}_x\!\left[
\mathcal{G}(t,x)
+
\int_{0}^{\tau(t,x)}
f\!\bigl(t_s, v(t_s, X_s^x), X_s^x\bigr)\,\rd s
\right],
\end{equation}
where the physical time along operational time $s$ is
\[
t_s=t-Y_s^\beta,
\]
and the stopping time is
\[
\tau(t,x):=\tau_t\wedge\tau_\Omega(x),
\qquad
\tau_t=\inf\{s>0:\,Y_s^\beta\ge t\},
\qquad
\tau_\Omega(x)=\inf\{s>0:\,X_s^x\notin\Omega\}.
\]

The terminal payoff $\mathcal{G}(t,x)$ encodes whether the stochastic trajectory
reaches the initial-time surface first or exits the spatial domain first. It is
given by
\begin{equation}\label{eq:terminal-payoff}
\mathcal{G}(t,x)
=
u_0\!\bigl(X_{\tau_t}^x\bigr)
\mathbf{1}_{\{\tau_t<\tau_\Omega(x)\}}
+
g\!\bigl(t_{\tau_\Omega(x)},X_{\tau_\Omega(x)}^x\bigr)
\mathbf{1}_{\{\tau_\Omega(x)\le\tau_t\}},
\end{equation}
where
\[
t_{\tau_\Omega(x)}:=t-Y_{\tau_\Omega(x)}^\beta
\]
is the physical time associated with the spatial exit point. The first
indicator corresponds to trajectories that remain in $\Omega$ until the
stochastic clock satisfies $t_s\le0$; the second corresponds to trajectories
that exit $\Omega$ at positive physical time before the clock reaches zero.

The PDE solution $u$ satisfies the fixed-point relation $u=\mathcal{T}u$. We
construct Picard iterates $\{u^{(k)}\}_{k\ge0}$ by
\begin{equation}\label{eq:picard-functional-dpi}
u^{(k+1)}=\mathcal{T}u^{(k)},
\qquad k=0,1,2,\ldots,
\end{equation}
initialized by $u^{(0)}\equiv0$.

For numerical realization, each iterate is parameterized by a neural network
$u_{\theta_k}(t,x)$. At Picard level $k$, we sample training inputs
$\{(t_i,x_i)\}_{i=1}^N$ from $(0,T]\times\Omega$. For each input, we simulate
$M$ independent trajectories of $(X_s^{x_i},Y_s^\beta)$ on the operational-time
grid
\[
s_\ell=\ell\Delta s,\qquad \ell=0,1,2,\ldots .
\]
The subordinator is advanced using the recursion
\eqref{eq:Y-recursion-rewrite}. The spatial process performs one fractional
walk-on-spheres update per operational step, with radius determined by the
mean-exit-time matching rule
\[
r
=
\left(
\frac{\Delta s}{\kappa_{d,\alpha}}
\right)^{1/\alpha}.
\]

For the $m$-th trajectory starting from $(t_i,x_i)$, define
\[
t_{i,\ell}^{(m)}
:=
t_i-Y_{s_\ell}^{\beta,(m)},
\qquad
x_{i,\ell}^{(m)}
\approx
X_{s_\ell}^{x_i,(m)}.
\]
The trajectory is stopped at the first index $\ell_\star^{(m)}$ such that
\[
t_{i,\ell_\star^{(m)}}^{(m)}\le0
\qquad\text{or}\qquad
x_{i,\ell_\star^{(m)}}^{(m)}\notin\Omega .
\]
The terminal contribution $\widehat{\mathcal{G}}_i^{(m)}$ is evaluated at this
stopping index according to \eqref{eq:terminal-payoff}. Let
\[
N_s^{(m)}:=\ell_\star^{(m)}-1
\]
denote the last accepted pre-stopping index. The path integral is then
approximated only over the accepted pre-stopping states.

For the integral term, a standard numerical quadrature is applied along the
accepted pre-stopping states. Writing
\[
f_{i,\ell}^{(m)}
:=
f\!\bigl(
t_{i,\ell}^{(m)},
u_{\theta_k}(t_{i,\ell}^{(m)},x_{i,\ell}^{(m)}),
x_{i,\ell}^{(m)}
\bigr),
\]
with the convention
\[
(t_{i,0}^{(m)},x_{i,0}^{(m)})=(t_i,x_i),
\]
we consider two quadrature rules. The right-point rectangle rule is
\begin{equation}\label{eq:quad-rect}
\int_0^{\tau(t_i,x_i)}
f\!\bigl(t_s,u_{\theta_k}(t_s,X_s),X_s\bigr)\,\rd s
\approx
\sum_{\ell=1}^{N_s^{(m)}}
f_{i,\ell}^{(m)}\,\Delta s ,
\end{equation}
while the trapezoidal rule is
\begin{equation}\label{eq:quad-trap}
\int_0^{\tau(t_i,x_i)}
f\!\bigl(t_s,u_{\theta_k}(t_s,X_s),X_s\bigr)\,\rd s
\approx
\sum_{\ell=1}^{N_s^{(m)}}
\frac{
f_{i,\ell-1}^{(m)}+f_{i,\ell}^{(m)}
}{2}\,\Delta s .
\end{equation}
For smooth deterministic integrands, the rectangle and trapezoidal rules are
formally first- and second-order accurate, respectively. Along the present
jump-driven stochastic trajectories, however, the practical accuracy is also
affected by path discretization and Monte Carlo variance. Our implementation
uses \eqref{eq:quad-trap}; a numerical comparison with \eqref{eq:quad-rect} is
given in Section~\ref{subsec:disk}.

The Monte Carlo regression label for input $(t_i,x_i)$ is
\begin{equation}\label{eq:mc-label-dpi}
\widehat{y}_i^{(k)}
=
\frac{1}{M}
\sum_{m=1}^M
\left[
\widehat{\mathcal{G}}_i^{(m)}
+
\sum_{\ell=1}^{N_s^{(m)}}
\frac{
f_{i,\ell-1}^{(m)}+f_{i,\ell}^{(m)}
}{2}\,\Delta s
\right].
\end{equation}

Finally, the next Picard network parameters are obtained by solving
\begin{equation}\label{eq:supervised-loss-dpi}
\theta_{k+1}
\in
\argmin_{\theta}
\frac{1}{N}
\sum_{i=1}^N
\bigl|u_{\theta}(t_i,x_i)-\widehat{y}_i^{(k)}\bigr|^2,
\end{equation}
which trains $u_{\theta_{k+1}}$ to regress the Monte Carlo approximation of
$\mathcal{T}u_{\theta_k}$.

Iterating \eqref{eq:mc-label-dpi}--\eqref{eq:supervised-loss-dpi} yields a
sequence of neural approximations to the fixed point of $\mathcal{T}$, and
therefore to the solution of the nonlinear fractional PDE.

\paragraph{Relaxed Picard variant.}
To stabilize the iteration when the neural approximation is still far from the
fixed point, we follow Corollary~\ref{cor:relaxed-picard} and consider the
relaxed update
\begin{equation}\label{eq:relaxed-picard-dpi}
u^{(k+1)}
=
(1-\eta)u^{(k)}+\eta\mathcal{T}u^{(k)},
\qquad \eta\in(0,1],
\end{equation}
which shares the unique fixed point of $\mathcal{T}$. At the network level,
this amounts to replacing the regression target in
\eqref{eq:supervised-loss-dpi} with the convex combination
\begin{equation}\label{eq:relaxed-mc-label}
\widehat{y}_i^{(k,\eta)}
:=
(1-\eta)u_{\theta_k}(t_i,x_i)
+
\eta\widehat{y}_i^{(k)}.
\end{equation}
Training $u_{\theta_{k+1}}$ against $\widehat{y}_i^{(k,\eta)}$ gives the
relaxed Picard update. Setting $\eta=1$ recovers the standard Picard scheme,
whereas $\eta\in(0,1)$ damps the update and improves stability when the
nonlinear term amplifies Monte Carlo or regression errors. The empirical effect
of $\eta$ is examined in Section~\ref{sec:numexp}.

\begin{algorithm}[t]
\caption{Deep Picard iteration for nonlinear space-time fractional diffusion PDEs}
\label{alg:dpi-stfd}
\begin{algorithmic}[1]
\Require Picard iterations $K$, dataset size $N$, MC paths per point $M$, operational step $\Delta s$, relaxation parameter $\eta\in(0,1]$.
\State Initialize $u_{\theta_0}\equiv0$ on $(0,T]\times\Omega$.
\For{$k=0,1,\ldots,K-1$}
  \State Sample training inputs $\{(t_i,x_i)\}_{i=1}^N\subset(0,T]\times\Omega$.
  \For{$i=1,\ldots,N$}
    \State Generate $M$ i.i.d.\ coupled trajectories
    $\{(X^{(m)}_{s_\ell},Y^{\beta,(m)}_{s_\ell})\}_{\ell\ge0}$
    on $s_\ell=\ell\Delta s$ using \eqref{eq:Y-recursion-rewrite} and the
    WoS sampler with radius \eqref{eq:radius-ball}, stopped at
    $\tau^{(m)}=\tau(t_i,x_i)$.
    \State For each path, evaluate the terminal payoff
    $\widehat{\mathcal{G}}_i^{(m)}$ at the stopping index and approximate the
    path integral by the trapezoidal rule \eqref{eq:quad-trap}.
    \State Compute the MC label
    \[
    \widehat y_i^{(k)}
    =
    \frac{1}{M}\sum_{m=1}^M
    \left[
    \widehat{\mathcal{G}}_i^{(m)}
    +
    \sum_{\ell=1}^{N_s^{(m)}}
    \frac{
    f_{i,\ell-1}^{(m)}+f_{i,\ell}^{(m)}
    }{2}\Delta s
    \right].
    \]
    \State Form the regression target
    \[
    \widehat{y}_i^{(k,\eta)}
    =
    (1-\eta)u_{\theta_k}(t_i,x_i)+\eta\widehat{y}_i^{(k)}.
    \]
  \EndFor
  \State Train $u_{\theta_{k+1}}$ by supervised regression of
  $\{\widehat{y}_i^{(k,\eta)}\}_{i=1}^N$ via \eqref{eq:supervised-loss-dpi}.
\EndFor
\State \Return $u_{\theta_K}$.
\end{algorithmic}
\end{algorithm}

\section{Numerical Experiments}\label{sec:numexp}

We evaluate Algorithm~\ref{alg:dpi-stfd} on a sequence of benchmark problems
of increasing difficulty, each equipped with a known reference solution.
Section~\ref{subsec:disk} considers the two-dimensional unit disk, on which
the fractional Laplacian of the spatial profile admits a closed form; this
benchmark serves to calibrate the method, study its convergence with respect
to the discretization parameters, and isolate the effect of Picard
relaxation. Section~\ref{subsec:rect} replaces the disk by the unit square
to test the algorithm on a non-smooth boundary. Section~\ref{subsec:doublebump}
returns to the disk with a multi-modal double-bump profile that probes the
ability of the network to resolve spatially heterogeneous solutions.
Section~\ref{subsec:highdim} concludes with $d=20,50,100$ unit-ball
experiments that examine the scalability of the method with the spatial
dimension. On every domain, two reaction nonlinearities of increasing
severity are tested.

\subsection{Experiment setup}\label{subsec:setup}

Throughout this section we fix the fractional orders $\alpha=1.5$ and $\beta=0.6$
and the time horizon $T=1.0$. To enforce the homogeneous Dirichlet condition
exactly, the neural approximation is parameterized in the hard-constrained
form
\begin{equation}\label{eq:hard-constraint}
u_\theta(t,x)=\varphi(x)\,v_\theta(t,x),
\end{equation}
where $\varphi$ is a smooth boundary factor satisfying $\varphi|_{\partial\Omega}=0$
and $v_\theta$ is a fully connected residual network with $\tanh$ activation.
This construction removes the boundary error from the optimization and lets the
training focus on the interior dynamics. The two-dimensional experiments
employ $3$ residual blocks of width $128$; the high-dimensional experiments
use $4$--$5$ blocks of width $512$--$1024$, with the precise configuration for
each $d$ given in Section~\ref{subsec:highdim}.

At each Picard iteration $k$, $N$ collocation points are sampled uniformly in
$(0,T]\times\Omega$ and $M$ independent Monte Carlo trajectories per point are
generated by the coupled subordinator--WoS simulator of
Section~\ref{subsec:sim-paths} with operational time step $\Delta s$. The
regression labels $\{\widehat{y}_i^{(k)}\}$ are computed via
\eqref{eq:mc-label-dpi} and combined into the relaxed target
$\widehat{y}_i^{(k,\eta)}$ as in Algorithm~\ref{alg:dpi-stfd}; the case
$\eta=1$ recovers the standard Picard update. Each Picard step solves the
supervised regression \eqref{eq:supervised-loss-dpi} by Adam with learning
rate $10^{-4}$, mini-batch size $512$, and $8{,}000$ gradient steps,
warm-started from $\theta_k$.

Accuracy is measured by two complementary RMSE metrics. The \emph{slice RMSE}
quantifies the spatial error at the terminal time on a uniform
$201\times 201$ grid $\mathcal{G}$ restricted to $\Omega$,
\begin{equation}\label{eq:slice-rmse}
\mathrm{RMSE}_{\mathrm{slice}}
=
\left(\frac{1}{|\mathcal{G}|}\sum_{x_j\in\mathcal{G}}
\bigl|u_{\theta_k}(T,x_j)-u_{\mathrm{ex}}(T,x_j)\bigr|^2\right)^{1/2},
\end{equation}
while the \emph{spacetime RMSE} measures the global accuracy on a freshly
sampled spacetime ensemble,
\begin{equation}\label{eq:spacetime-rmse}
\mathrm{RMSE}_{\mathrm{st}}
=
\left(\frac{1}{10000}\sum_{l=1}^{10}\sum_{j=1}^{1000}
\bigl|u_{\theta_k}(t_l,x_j^{(l)})-u_{\mathrm{ex}}(t_l,x_j^{(l)})\bigr|^2\right)^{1/2},
\end{equation}
where $\{t_l\}_{l=1}^{10}$ are equally spaced in $[0.1,1.0]$ and, for each
$t_l$, $\{x_j^{(l)}\}_{j=1}^{1000}$ are drawn independently and uniformly in
$\Omega$. Unless stated otherwise, the two-dimensional experiments use the
default configuration $N=32{,}768$ and $M=16$, while the operational
step $\Delta s$ and the relaxation parameter $\eta$ are specified per example.
All experiments were conducted on a single NVIDIA RTX 4090 GPU; quadrature and
precomputation details specific to individual examples are reported in the
Appendix.

\subsection{Example 1: Unit disk}\label{subsec:disk}

We begin with the unit disk $\Omega=B_1(0)\subset\mathbb{R}^2$, on which the
fractional Laplacian of the spatial profile admits a closed form. This
benchmark removes any precomputation error and isolates the algorithmic
behaviour of the Deep Picard iteration.

We consider the nonlinear space-time fractional Dirichlet problem
\begin{equation}\label{eq:stfd-nl-system}
\begin{cases}
\partial_t^\beta u + (-\Delta)^{\alpha/2} u = f(t,u,x), & (t,x)\in(0,T]\times\Omega,\\
u(t,x)=0, & (t,x)\in[0,T]\times\Omega^c,\\
u(0,x)=0, & x\in\Omega,
\end{cases}
\end{equation}
with reference exact solution
\begin{equation}\label{eq:exact-sol-disk}
u_{\mathrm{ex}}(t,x) = t^\beta\,\varphi(x), \qquad \varphi(x) := (1 - |x|^2)_+^{\alpha/2}.
\end{equation}
The profile $\varphi$ satisfies $(-\Delta)^{\alpha/2}\varphi(x) = C_{2,\alpha}$
for all $x\in\Omega$ with
$C_{2,\alpha} = 2^\alpha\bigl(\Gamma(1+\alpha/2)\bigr)^2$, yielding a
closed-form fractional Laplacian. Two reaction nonlinearities of increasing
severity are tested:
\begin{itemize}
\item \textbf{Setting~A} (quadratic): $f_A(t,u,x) = \lambda u^2 + g_A(t,x)$, with $\lambda=1$ and
\[
g_A(t,x) = \Gamma(\beta{+}1)\,\varphi(x) + t^\beta C_{2,\alpha} - \lambda\,t^{2\beta}\varphi(x)^2.
\]
\item \textbf{Setting~B} (cubic): $f_B(t,u,x) = \kappa(u - u^3) + g_B(t,x)$, with $\kappa=1$ and
\[
g_B(t,x) = \Gamma(\beta{+}1)\,\varphi(x) + t^\beta C_{2,\alpha} - \kappa\bigl(t^\beta\varphi(x) - t^{3\beta}\varphi(x)^3\bigr).
\]
\end{itemize}
Setting~B is the more demanding of the two: its Jacobian
$|\partial_u f_B|=|\kappa(1-3u^2)|$ can reach $3\kappa$, amplifying Monte
Carlo label noise across successive Picard iterations.  We run Algorithm~\ref{alg:dpi-stfd} at $\Delta s=5\times 10^{-3}$ and compare
$\eta=1.0$ against $\eta=0.6$, with all other parameters at their default
values.

Table~\ref{tab:disk-results} reports the resulting errors. Relaxation reduces the spacetime RMSE by $34\%$ for Setting~A and
by $31\%$ for Setting~B. The mechanism is consistent across the two
nonlinearities: without relaxation the iteration reaches a transient accuracy
plateau and then oscillates as Monte Carlo label noise accumulates, whereas
damping the update with $\eta=0.6$ trades a slower initial transient for a
lower and stabler terminal error. The benefit
is larger for Setting~B, whose Jacobian $|\partial_u f_B|\le 3\kappa$
amplifies the label variance. As shown in
Figure~\ref{fig:circle-heatmap}, the spatial profile is recovered with high
fidelity; the residual error concentrates near $\partial\Omega$, where the
boundary singularity of $\varphi$ steepens the spatial gradient.

\begin{table}[pos=htbp]
\centering
\caption{Effect of Picard relaxation on the unit disk ($M=16$, $\Delta s = 5\times 10^{-3}$). Bold values indicate the best result for each setting.}
\label{tab:disk-results}
\begin{tabular}{llcc}
\toprule
Setting & $\eta$ & Slice RMSE & Spacetime RMSE \\
\midrule
A & $1.0$ & $1.88\times 10^{-2}$ & $1.21\times 10^{-2}$ \\
A & $0.6$ & $\mathbf{1.16\times 10^{-2}}$ & $\mathbf{8.00\times 10^{-3}}$ \\
\midrule
B & $1.0$ & $1.90\times 10^{-2}$ & $1.23\times 10^{-2}$ \\
B & $0.6$ & $\mathbf{1.18\times 10^{-2}}$ & $\mathbf{8.50\times 10^{-3}}$ \\
\bottomrule
\end{tabular}
\end{table}

\begin{figure}[pos=htbp]
\centering
\subcaptionbox{Setting~A: predicted (left), exact (center), error (right) at $t=1.0$}{
\includegraphics[width=0.78\linewidth]{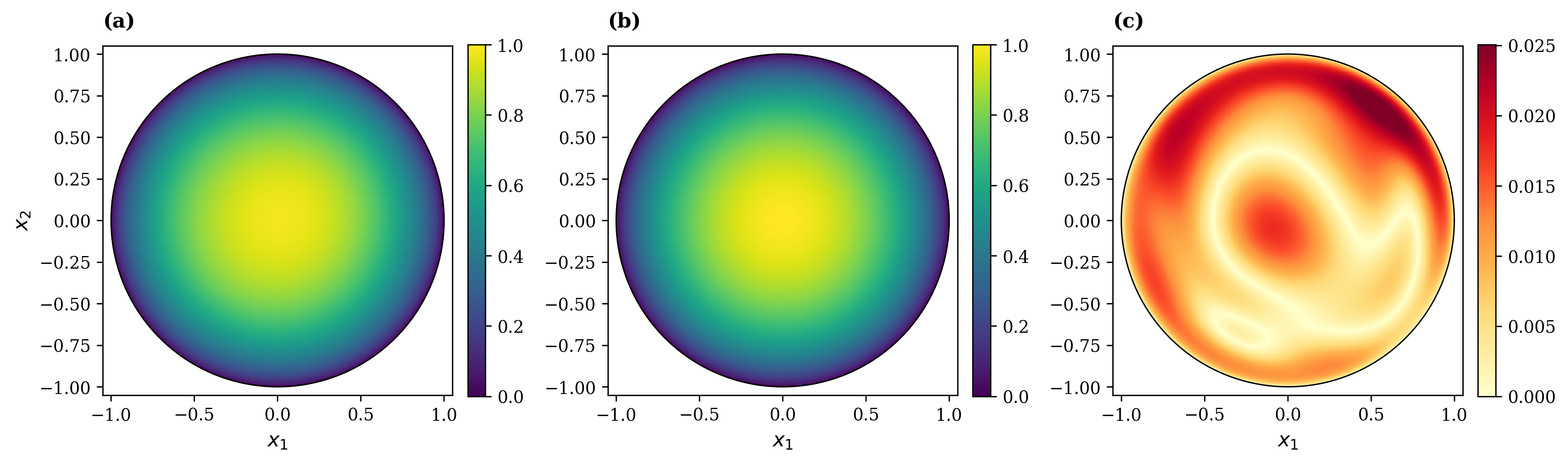}
}
\subcaptionbox{Setting~B: predicted (left), exact (center), error (right) at $t=1.0$}{
\includegraphics[width=0.78\linewidth]{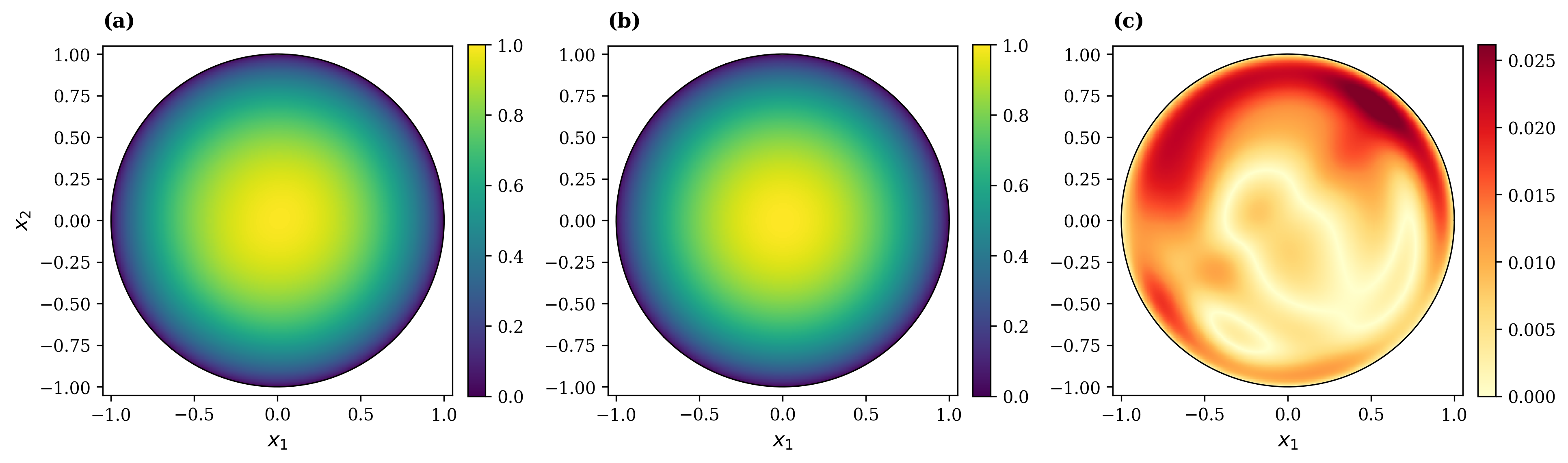}
}
\caption{Solution quality at $t=T=1.0$ for the unit disk with $\eta=0.6$.}
\label{fig:circle-heatmap}
\end{figure}

We next study how the error depends on the operational step $\Delta s$, the
number of Monte Carlo paths $M$, and the choice of quadrature rule, all
under Setting~A with $\eta=0.6$.

\paragraph{Step-size convergence.}
Refining $\Delta s$ over $\{4\times 10^{-2},2\times 10^{-2},10^{-2},5\times 10^{-3}\}$
reduces the spacetime RMSE by roughly a fivefold factor, see
Table~\ref{tab:conv-ds} and Figure~\ref{fig:conv-order}(a). A least-squares
fit of $\log\mathrm{RMSE}_{\mathrm{st}}$ against $\log\Delta s$ yields an
empirical rate of approximately $0.81$, indicating sublinear convergence of
the time discretization at the resolutions tested.

\paragraph{Monte Carlo path convergence.}
With $\Delta s$ fixed at $2\times 10^{-3}$, varying $M\in\{4,8,16,32\}$
yields the spacetime RMSE reported in Table~\ref{tab:conv-mc} and
Figure~\ref{fig:conv-order}(b). The error decays at a rate close to the
theoretical $\mathcal{O}(M^{-1/2})$ between $M=4$ and $M=16$ and then
saturates at $M=32$, signalling that the time-step discretization error
has become the dominant component at the accuracy level
${\sim}\,6\times 10^{-3}$.

\begin{table}[pos=htbp]
\centering
\caption{Accuracy under different step sizes on the unit disk (Setting~A, $M=16$, $\eta=0.6$).}
\label{tab:conv-ds}
\begin{tabular}{ccc}
\toprule
$\Delta s$ & Slice RMSE & Spacetime RMSE \\
\midrule
$4.0\times 10^{-2}$ & $5.60\times 10^{-2}$ & $4.37\times 10^{-2}$ \\
$2.0\times 10^{-2}$ & $3.21\times 10^{-2}$ & $2.51\times 10^{-2}$ \\
$1.0\times 10^{-2}$ & $2.16\times 10^{-2}$ & $1.57\times 10^{-2}$ \\
$5.0\times 10^{-3}$ & $1.16\times 10^{-2}$ & $8.00\times 10^{-3}$ \\
\bottomrule
\end{tabular}
\end{table}

\begin{table}[pos=htbp]
\centering
\caption{Monte Carlo path convergence on the unit disk (Setting~A, $\Delta s = 2\times 10^{-3}$, $\eta=0.6$).}
\label{tab:conv-mc}
\begin{tabular}{ccc}
\toprule
$M$ & Slice RMSE & Spacetime RMSE \\
\midrule
$4$  & $2.10\times 10^{-2}$ & $1.12\times 10^{-2}$ \\
$8$  & $1.57\times 10^{-2}$ & $1.01\times 10^{-2}$ \\
$16$ & $9.66\times 10^{-3}$ & $\mathbf{6.09\times 10^{-3}}$ \\
$32$ & $\mathbf{7.21\times 10^{-3}}$ & $6.81\times 10^{-3}$ \\
\bottomrule
\end{tabular}
\end{table}

\paragraph{Effect of the quadrature rule.}
We finally compare the right-point rectangle rule \eqref{eq:quad-rect}
against the trapezoidal rule \eqref{eq:quad-trap} under identical seeds and
network configuration. As reported in Table~\ref{tab:quad-comparison}, the
two quadratures yield spacetime RMSEs that differ by less than $10\%$ in
every row and neither rule dominates uniformly. The quadrature discretization
is therefore not the dominant error source; the Monte Carlo variance and the
neural regression residual are. We adopt the trapezoidal rule throughout the
remaining experiments out of convention.

\begin{table}[pos=htbp]
\centering
\caption{Comparison of quadrature rules on the unit disk (Setting~A, $M=16$, $\eta=0.6$). Bold values mark the smaller spacetime RMSE in each row.}
\label{tab:quad-comparison}
\begin{tabular}{ccc}
\toprule
$\Delta s$ & Rectangle RMSE & Trapezoidal RMSE \\
\midrule
$4.0\times 10^{-2}$ & $4.37\times 10^{-2}$ & $\mathbf{4.07\times 10^{-2}}$ \\
$2.0\times 10^{-2}$ & $\mathbf{2.51\times 10^{-2}}$ & $2.55\times 10^{-2}$ \\
$1.0\times 10^{-2}$ & $\mathbf{1.57\times 10^{-2}}$ & $1.62\times 10^{-2}$ \\
$5.0\times 10^{-3}$ & $\mathbf{8.00\times 10^{-3}}$ & $8.61\times 10^{-3}$ \\
\bottomrule
\end{tabular}
\end{table}

\begin{figure}[pos=htbp]
\centering
\includegraphics[width=0.95\linewidth]{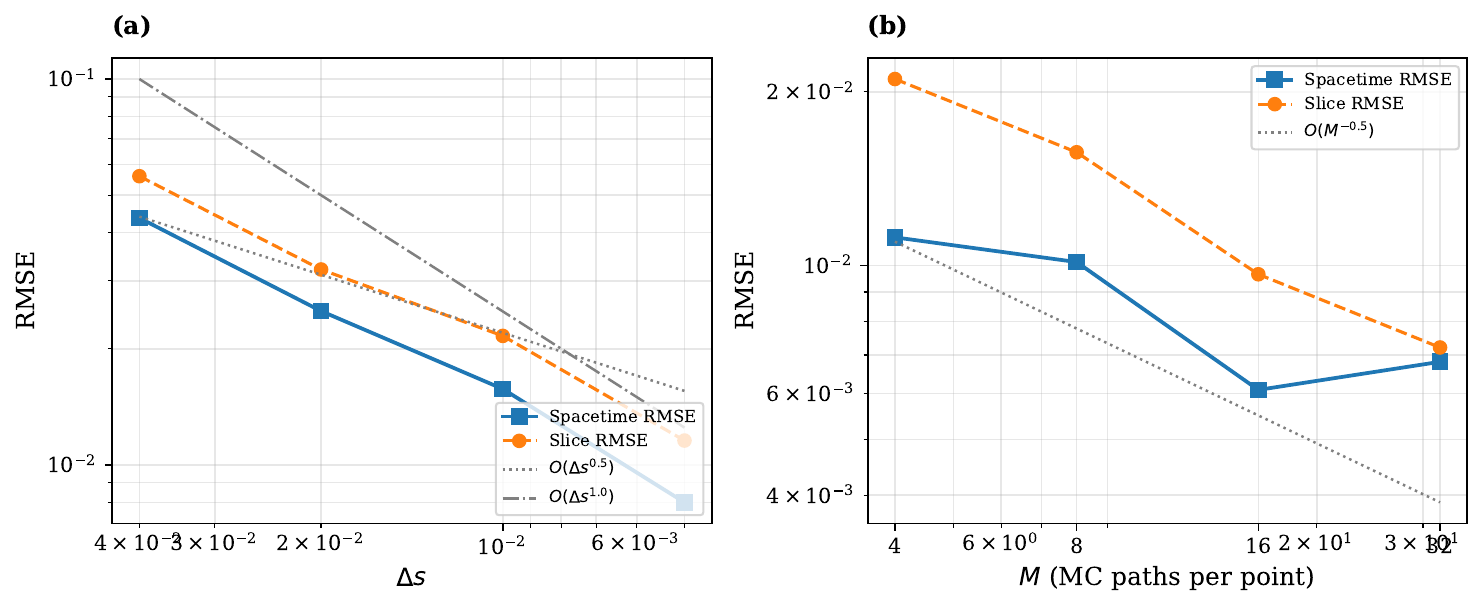}
\caption{Convergence of the spacetime RMSE on the unit disk (Setting~A, $\eta=0.6$).
(a)~Step-size convergence with $M=16$ fixed.
(b)~Monte Carlo path convergence with $\Delta s = 2\times 10^{-3}$ fixed.}
\label{fig:conv-order}
\end{figure}

\subsection{Example 2: Unit square}\label{subsec:rect}

We next replace the disk by the unit square $\Omega=(-1,1)^2$, which probes
the algorithm on a non-smooth boundary and on a profile whose fractional
Laplacian no longer admits a closed form. The reference solution is
$u_{\mathrm{ex}}(t,x)=t^\beta\,\varphi_R(x)$ with
\begin{equation}\label{eq:phi-rect}
\varphi_R(x) = (1-x_1^2)_+^2\,(1-x_2^2)_+^2,
\end{equation}
and the consistency forcing is constructed from $u_{\mathrm{ex}}$ in the
same way as in Example~\ref{subsec:disk}, with
$q_{\alpha,R}(x):=(-\Delta)^{\alpha/2}\varphi_R(x)$ in place of the
analytical constant $C_{2,\alpha}$. Since $\varphi_R$ is compactly supported,
$q_{\alpha,R}$ admits the Fourier-multiplier representation
$\mathcal{F}^{-1}\bigl(|\xi|^\alpha\widehat{\varphi_R}\bigr)$ and is
precomputed by a single 2D FFT on a periodic grid containing the support of
$\varphi_R$ prior to training. Two reaction nonlinearities of the same form
as before are tested:
\begin{itemize}
\item \textbf{Setting~A}: $f_A(t,u,x) = \lambda u^2 + \Gamma(\beta{+}1)\,\varphi_R(x) + t^\beta\,q_{\alpha,R}(x) - \lambda\,t^{2\beta}\varphi_R(x)^2$.
\item \textbf{Setting~B}: $f_B(t,u,x) = \kappa(u-u^3) + \Gamma(\beta{+}1)\,\varphi_R(x) + t^\beta\,q_{\alpha,R}(x) - \kappa\bigl(t^\beta\varphi_R(x) - t^{3\beta}\varphi_R(x)^3\bigr)$.
\end{itemize}
We run both settings at $\Delta s=2\times 10^{-3}$ and $\eta=0.5$.

Table~\ref{tab:rect-results} reports the resulting errors. Both settings exhibit monotone convergence
(Figure~\ref{fig:rect-convergence}); Setting~B even attains the lowest
spacetime RMSE among all two-dimensional experiments, indicating that the
choice $\eta=0.5$ supplies sufficient variance control for both
nonlinearities on this domain. The smaller errors compared with Example~\ref{subsec:disk} are attributable
to the higher boundary regularity of $\varphi_R$ on the square relative to
$\varphi$ on the disk. As shown in Figure~\ref{fig:rect-heatmap}, the
residual error concentrates at the corners, where $\varphi_R$ varies most
rapidly along the diagonal direction.

\begin{table}[pos=htbp]
\centering
\caption{Error metrics for the unit square examples ($\eta=0.5$, $M=16$).}
\label{tab:rect-results}
\begin{tabular}{lcc}
\toprule
Setting & Slice RMSE & Spacetime RMSE \\
\midrule
A ($\lambda u^2$) & $6.98\times 10^{-3}$ & $3.66\times 10^{-3}$ \\
B ($\kappa(u-u^3)$) & $\mathbf{4.33\times 10^{-3}}$ & $\mathbf{2.86\times 10^{-3}}$ \\
\bottomrule
\end{tabular}
\end{table}

\begin{figure}[pos=htbp]
\centering
\includegraphics[width=0.95\linewidth]{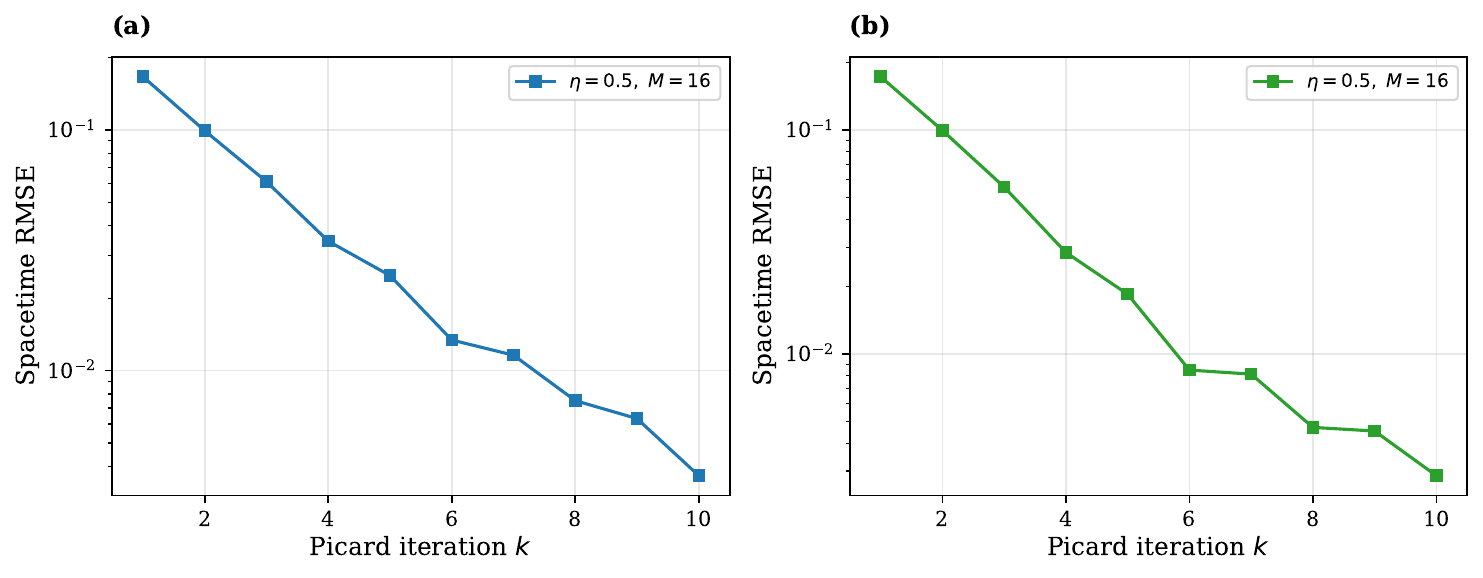}
\caption{Spacetime RMSE versus Picard iteration for the unit square examples.  (a)~Setting~A and (b)~Setting~B both exhibit monotone convergence under relaxation $\eta=0.5$ with $M=16$ paths.}
\label{fig:rect-convergence}
\end{figure}

\begin{figure}[pos=htbp]
\centering
\subcaptionbox{Setting~A: predicted (left), exact (center), error (right) at $t=1.0$}{
\includegraphics[width=0.78\linewidth]{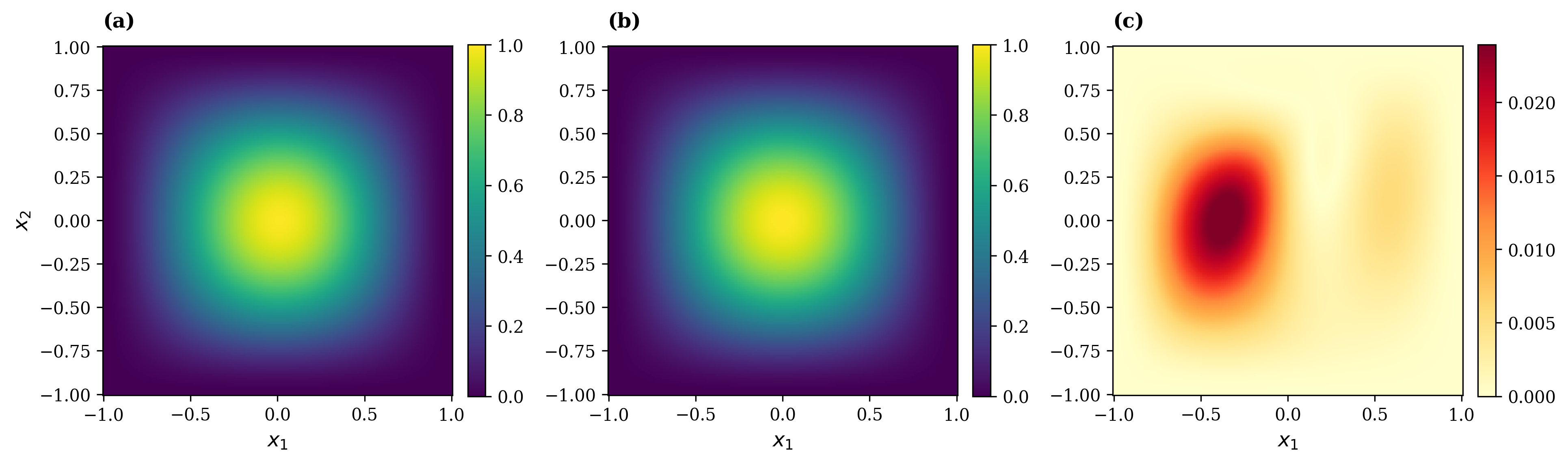}
}
\subcaptionbox{Setting~B: predicted (left), exact (center), error (right) at $t=1.0$}{
\includegraphics[width=0.78\linewidth]{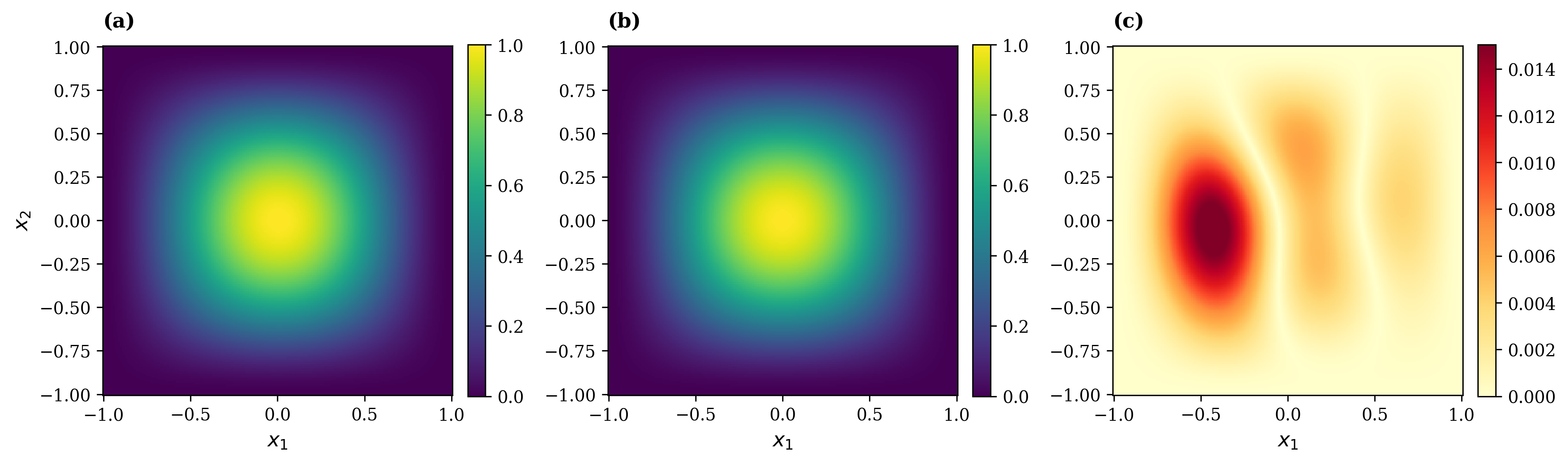}
}
\caption{Solution quality at $t=T=1.0$ for the unit square domain.}
\label{fig:rect-heatmap}
\end{figure}

\subsection{Example 3: Unit disk with double-bump profile}
\label{subsec:doublebump}

We next consider a more heterogeneous benchmark on the unit disk. Unlike the
single-bump profile in Example~\ref{subsec:disk}, the reference solution here
contains two localized components with overlapping supports, which creates a
non-convex spatial structure and a saddle region between the two peaks. This
example is designed to test whether the proposed Deep Picard framework can
resolve spatially localized features while maintaining stable nonlinear
fixed-point iterations.

The exact solution is prescribed as
\begin{equation}\label{eq:exact-sol-doublebump}
u_{\mathrm{ex}}(t,x) = t^\beta\,\psi(x), \qquad
\psi(x) := \varphi_1(x) + \kappa\,\varphi_2(x),
\end{equation}
where
\begin{equation}\label{eq:bump-def}
\varphi_j(x)
=
\Bigl(1 - \tfrac{|x-c_j|^2}{r^2}\Bigr)_+^{1+\alpha/2},
\qquad j=1,2 .
\end{equation}
The centers and relative amplitude are chosen so that the two bumps partially
overlap inside the disk. The exponent $1+\alpha/2$ gives a smoother compactly
supported profile than the boundary factor used in Example~\ref{subsec:disk},
which keeps the corresponding fractional Laplacian bounded near the support
interfaces. Since no closed-form expression is available for
$(-\Delta)^{\alpha/2}\psi$, this term is precomputed using the same
Fourier-multiplier procedure as in the square-domain experiment.

We test the quadratic reaction setting, with the forcing term chosen by
substituting $u_{\mathrm{ex}}$ into the governing equation. Compared with the
single-bump disk problem, this benchmark is substantially more demanding: the
Monte Carlo labels must capture not only the global decay induced by the
fractional dynamics, but also the localized interaction between two separated
spatial features.

The results are shown in Figure~\ref{fig:double-bump}. The learned solution
captures both peaks and the intermediate saddle region, while the largest
errors are concentrated near the support interfaces of the two bumps, where
the spatial profile changes most rapidly. The Monte Carlo convergence curve
follows the expected square-root decay over the stable range, indicating that
sampling variance remains the dominant source of error once the operational
time discretization is sufficiently refined. Compared with the single-bump
case, this example requires a larger sampling budget to obtain stable Picard
updates, which suggests that spatial heterogeneity, rather than the polynomial
degree of the nonlinearity alone, is a main driver of the computational cost.

\begin{figure}[pos=htbp]
\centering
\includegraphics[width=0.88\linewidth]{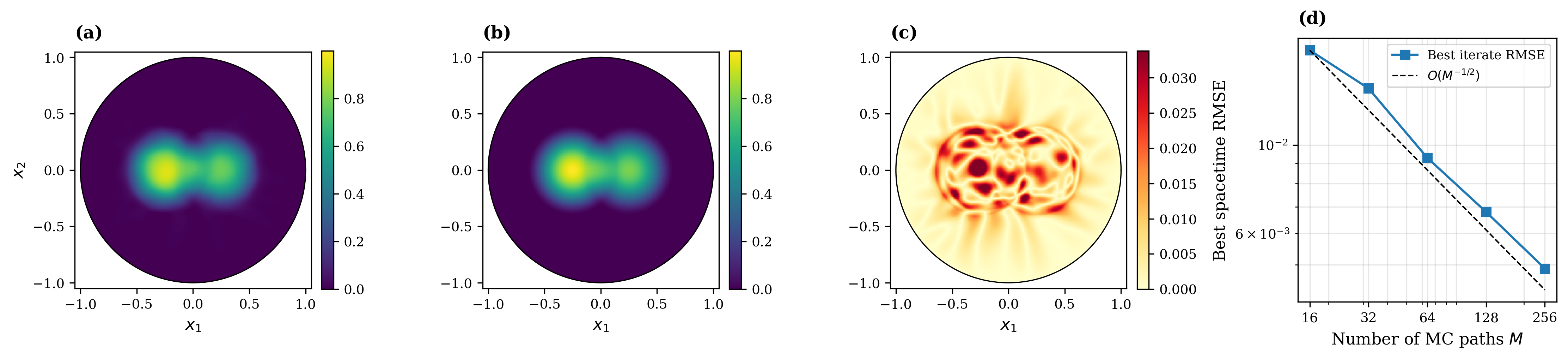}
\caption{Double-bump experiment on the unit disk, showing the learned solution, exact solution, pointwise error, and Monte Carlo path convergence.}
\label{fig:double-bump}
\end{figure}

\subsection{High-dimensional examples}
\label{subsec:highdim}

We finally examine the scalability of the proposed method on unit-ball
problems in dimensions $d=20,50,100$. The goal of this experiment is not only
to test whether the neural approximation can handle high-dimensional inputs,
but also to verify that the trajectory-based stochastic formulation avoids
the tensor-product grids and dense nonlocal discretizations that would be
prohibitive for classical fractional PDE solvers.

The benchmark uses the same radial profile as in Example~\ref{subsec:disk},
\[
u_{\mathrm{ex}}(t,x)=t^\beta(1-|x|^2)_+^{\alpha/2},
\]
for which the fractional Laplacian admits a closed-form expression on the
unit ball. Both reaction settings are tested. As the spatial dimension grows,
the network width and the Monte Carlo sampling budget are increased
accordingly, while the same relaxed Picard strategy is used throughout.

Table~\ref{tab:highdim-results} reports the resulting errors. The method
remains stable across all tested dimensions and both nonlinearities. In
particular, the spacetime RMSE does not deteriorate as the dimension
increases. This behavior is consistent with concentration of measure in
high-dimensional balls: most uniformly sampled points lie near the boundary,
where the radial profile is small, thereby reducing the effective variance of
the regression labels. The two nonlinear settings exhibit nearly identical
accuracy, indicating that in this regime the dominant error contribution comes
from stochastic label generation and regression, rather than from the specific
form of the reaction term.

Figure~\ref{fig:highdim-convergence} further shows that the relaxed Picard
iteration stabilizes rapidly in all dimensions. The two-dimensional
cross-sections in Figure~\ref{fig:highdim-slices} confirm that the radial
solution profile is recovered accurately, with the residual error mainly
localized near the boundary layer of the hard-constrained ansatz. These
results demonstrate that the proposed method retains its mesh-free and
dimension-robust character in high-dimensional fractional diffusion problems.

\begin{table}[pos=htbp]
\centering
\caption{Error metrics for the high-dimensional experiments. The slice RMSE is computed on a two-dimensional cross-section with the remaining coordinates set to zero.}
\label{tab:highdim-results}
\begin{tabular}{clcc}
\toprule
$d$ & Setting & Slice RMSE & Spacetime RMSE \\
\midrule
\multirow{2}{*}{$20$}
 & A ($\lambda u^2$) & $1.72\times 10^{-2}$ & $3.93\times 10^{-3}$ \\
 & B ($\kappa(u{-}u^3)$) & $1.20\times 10^{-2}$ & $3.66\times 10^{-3}$ \\
\midrule
\multirow{2}{*}{$50$}
 & A ($\lambda u^2$) & $1.68\times 10^{-2}$ & $2.74\times 10^{-3}$ \\
 & B ($\kappa(u{-}u^3)$) & $1.62\times 10^{-2}$ & $2.80\times 10^{-3}$ \\
\midrule
\multirow{2}{*}{$100$}
 & A ($\lambda u^2$) & $1.64\times 10^{-2}$ & $1.70\times 10^{-3}$ \\
 & B ($\kappa(u{-}u^3)$) & $1.59\times 10^{-2}$ & $1.69\times 10^{-3}$ \\
\bottomrule
\end{tabular}
\end{table}

\begin{figure}[pos=htbp]
\centering
\includegraphics[width=0.9\linewidth]{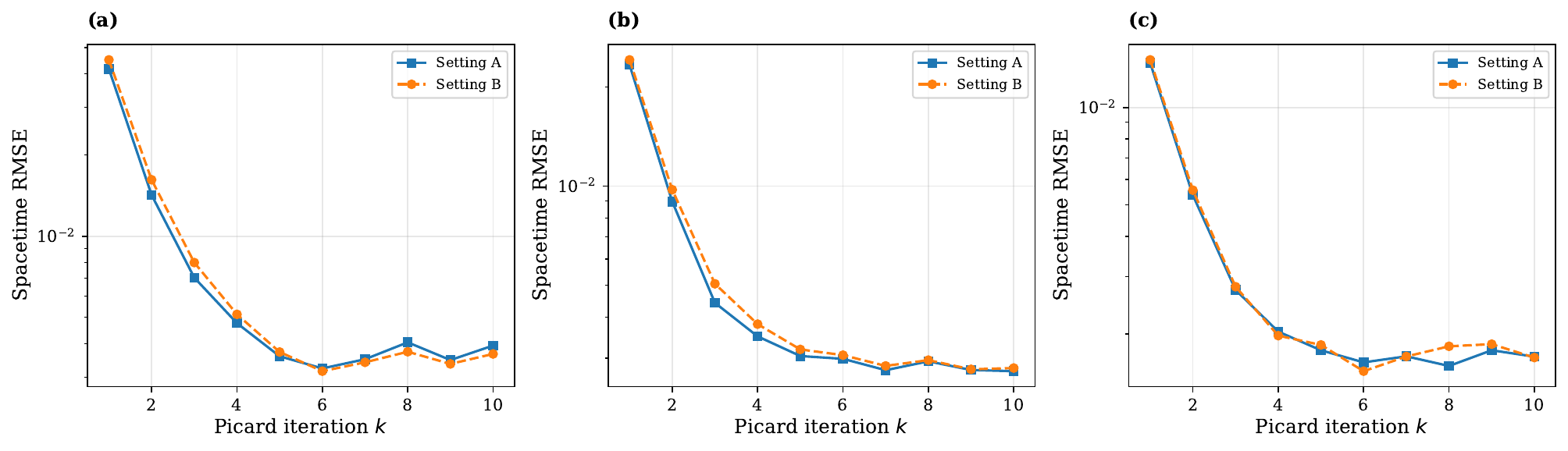}
\caption{Spacetime RMSE versus Picard iteration for the high-dimensional examples.}
\label{fig:highdim-convergence}
\end{figure}

\begin{figure}[pos=htbp]
\centering
\subcaptionbox{$d=20$, Setting~A}{
\includegraphics[width=0.72\linewidth]{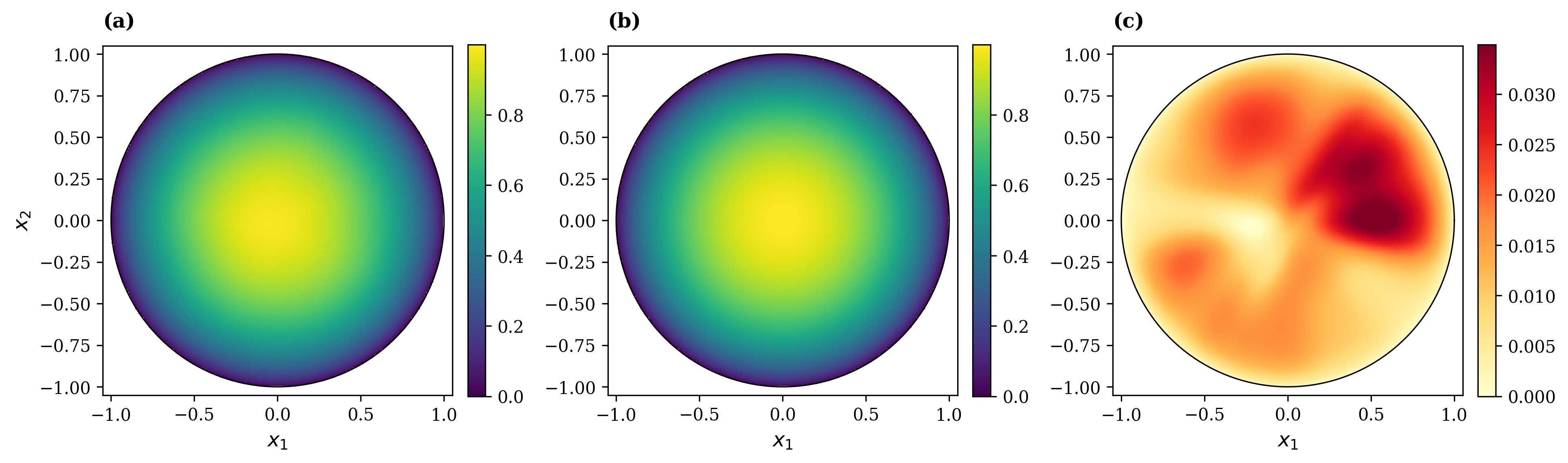}
}
\subcaptionbox{$d=50$, Setting~A}{
\includegraphics[width=0.72\linewidth]{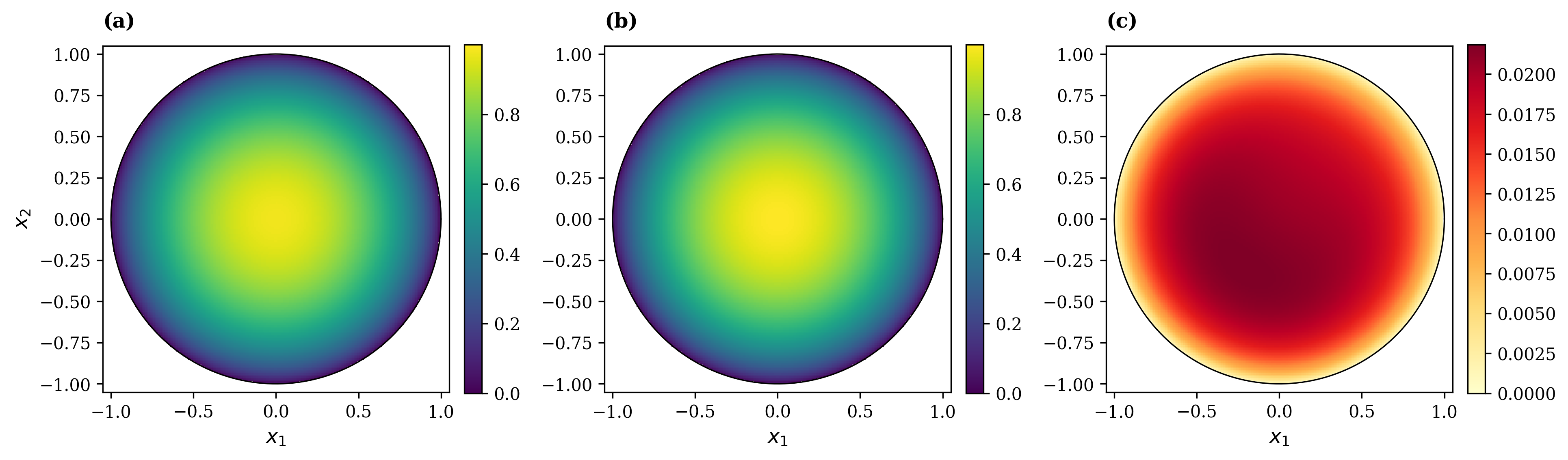}
}
\subcaptionbox{$d=100$, Setting~A}{
\includegraphics[width=0.72\linewidth]{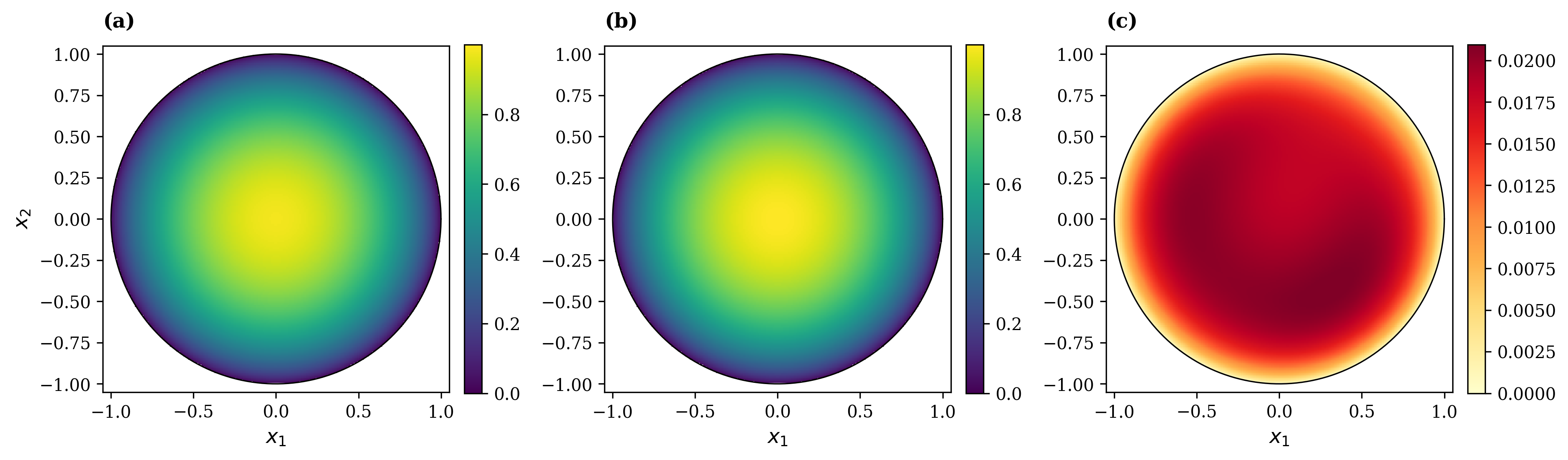}
}
\caption{Two-dimensional cross-sections of the predicted solution, exact solution, and pointwise error at $t=T=1.0$ for Setting~A.}
\label{fig:highdim-slices}
\end{figure}

\section{Conclusion}

In this work, we proposed a Deep Picard iteration framework for solving
high-dimensional nonlinear space-time fractional PDEs. The method is
based on a nonlinear fractional Feynman--Kac formulation, which recasts the
fractional PDE as a stochastic fixed-point problem. Instead of directly
discretizing the Caputo memory term and the nonlocal fractional Laplacian, the
proposed approach approximates successive fixed-point updates by Monte Carlo
simulation of the underlying fractional dynamics and realizes these updates by
supervised neural-network regression. In this way, the nonlinear fractional PDE
is transformed into a sequence of learning problems driven by stochastic
representations.

A key feature of the proposed framework is that it is naturally suited to
nonlinear problems. The nonlinearity enters the stochastic representation
through the fixed-point operator and is handled by successive Picard updates,
rather than by differentiating a global residual involving fractional operators.
This structure separates the simulation of fractional dynamics from the
regression of nonlinear solution updates, leading to a mesh-free and
trajectory-based numerical scheme. It also suggests a possible route toward
more general nonlinear models, including fully nonlinear space-time
fractional equations, provided that suitable stochastic or fixed-point
representations can be constructed.

The numerical experiments demonstrate the effectiveness of the method on a
range of benchmark problems. On two-dimensional disk and square domains, the
method accurately recovers reference solutions under both quadratic and cubic
reaction nonlinearities. The observed convergence with respect to the
operational time step and the number of Monte Carlo trajectories is consistent
with the expected behavior of stochastic discretization and Monte Carlo
averaging. The relaxation strategy further improves the stability of the
Picard iteration, especially when the nonlinear term amplifies sampling and
regression errors. In the double-bump example, the method remains stable for a
more spatially heterogeneous solution profile, while the high-dimensional
experiments up to $d=100$ show that the trajectory-based formulation avoids
tensor-product spatial grids and retains its scalability in high dimensions.

Several directions remain for future work. First, it would be valuable to extend
the framework to more complicated geometries and more general boundary
conditions, where the construction of suitable boundary treatments may require
additional analytical and numerical ingredients. Second, more advanced
fixed-point iteration schemes should be investigated. Possible directions
include adaptive relaxation, Anderson acceleration, Newton--Picard variants, or
variance-aware update rules that account for the stochastic error in the Monte
Carlo labels. Third, the computational cost of label generation grows with the
number of Monte Carlo trajectories and Picard iterations. Multilevel Monte
Carlo, control variates, importance sampling, and other variance-reduction
strategies may substantially improve efficiency while preserving accuracy.
Finally, a rigorous discretization-error analysis remains an important open
problem. Such an analysis should quantify the combined effects of subordinator
discretization, walk-on-spheres approximation, numerical quadrature, Monte
Carlo sampling, neural regression, and fixed-point iteration error. These
questions form a natural basis for further theoretical and algorithmic
development of learning-based solvers for nonlinear fractional PDEs.

\section*{Acknowledgements}
This work was supported by the National Key R\&D Program of China (Grant No.\ 2021YFA0719200) and the National Natural Science Foundation of China (Grant No.\ 12071244).

\bibliographystyle{cas-model2-names}
\bibliography{sample}
\end{document}